\def\underset#1#2{{\mathrel{\mathop {{}_{} {#2}}\limits_{{#1}_{}}}}}
\def\upplim_#1{\underset{#1}{\overline\lim}\;}
\def\lowlim_#1{\underset{#1}{\underline\lim}\;}
\newtheorem{definition}[equation]{Definition}
\newtheorem{claim}[equation]{\indent{\it Claim}\rm }
\newtheorem{lemma}[equation]{Lemma}
\newtheorem{remark}[equation]{\indent\rm {\it Remark}}
\newtheorem{theorem}[equation]{Theorem}
\newcommand{\C}{{\mathbb{C}}}
\renewcommand{\c}{{\bf c}}
\newcommand{\x}{{\bf x}}
\newcommand{\y}{{\bf y}}
\renewcommand{\P}{{\mathbb{P}}}
\newcommand{\Q}{{\mathbb{Q}}}
\newcommand{\R}{{\mathbb{R}}}
\numberwithin{equation}{section}
\title[Some generalizations of Schmidt's subspace theorem]{Some generalizations of Schmidt's subspace theorem} 
\author{Si Duc Quang}
\begin{document}

\maketitle 

\begin{abstract}
The aim of this paper is twofold. The first is to give a quantitative version of Schmidt's subspace theorem for arbitrary families of higher degree polynomials. The second is to give a generalization of the subspace theorem for arbitrary families of closed subschemes in algebraic projective varieties. 
\end{abstract}

\def\thefootnote{\empty}
\footnotetext{2010 Mathematics Subject Classification:
Primary 11J68, 11J25; Secondary 11J97, 32A22.\\
\hskip8pt Key words and phrases: Diophantine approximation; subspace theorem; homogeneous polynomial; closed scheme.}

\section{Introduction}
The theory of Schmidt's subspace theorem is an important branch of Diophantine approximation with many significant applications to Diophantine equations, Diophantine geometry. The first versions of the subspace theorem were established by Schmidt in the 70's of the last century (see \cite{Sch72,Sch75,Sch89}). After that, his results have been improved and generalized by many authors. In 2008, Evertse and Ferretti gave a very strong generalization of these results. In order to state their result, we recall the following standard notions in Diophantine approximation.

Let $K$ be a number field. Denote by $M_K$ the set of places (equivalence classes of absolute values) of $K$ and by $M^\infty_K$ the set of archimedean places of $K$. For each $v\in M_K$, we choose the normalized absolute value $|.|_v$ such that $|.|_v=|.|$ on $\mathbb Q$ (the standard absolute value) if $v$ is archimedean, and $|p|_v=p^{-1}$ if $v$ is non-archimedean and lies above the rational prime $p$. Denote by $K_v$ the completion of $K$ with respect to $v$ and set
 $$\|x\|_v:=|x|^{n_v}_v\ \text{ where }n_v := [K_v :\mathbb Q_v]/[K :\mathbb Q].$$
The absolute logarithmic height of a projective point ${\bf x}=(x_0:\cdots :x_N)\in\P^N(K)$ is defined by
$$h({\bf x}):=\sum_{v\in M_K}\log\max\{\|x_0\|_v,\ldots,\|x_N\|_v\}.$$
Also, we choose an extension of $|.|_v$ to ${\overline{\Q}}$ which amounts to extending $|.|_v$ to the algebraic closure $\overline{K}_v$ of $K_v$ and choosing an embedding ${\overline{\Q}}$ into $\overline{K}_v$.  For ${\bf x}=(x_0,\ldots,x_N)\in{\overline{\Q}}^{N+1}$ we put $\|{\bf x}\|_v:=\max\{|x_0|_v,\ldots,|x_N|_v\}$. 

For a given system $f_0,\ldots,f_m$ of homogeneous polynomials in ${\overline{\Q}}[x_0,\ldots,x_N]$, define 
$$h(f_0,\ldots,f_m):=h({\bf a}),$$
where ${\bf a}$ is a vector consisting of the nonzero coefficients of $f_0,\ldots,f_m$. 
The height of a projective subvariety $X$ of $\P^N$ defined over ${\overline{\Q}}$ is defined by 
$$h(X):=h(F_X),$$
where $F_X$ is the Chow form of $X$ (see Section $\S $2).

Denote by $G_K$ the Galois group of ${\overline{\Q}}$ over $K$. For each ${\bf x}=(x_0,\ldots ,x_N)\in {\overline{\Q}}^{N+1},\sigma\in G_K$ we write $\sigma ({\bf x})=(\sigma (x_0),\ldots ,\sigma(x_N))$.  Evertse and Ferretti proved the following quantitative version of the subspace theorem.

\vskip0.2cm
\noindent
{\bf Theorem A} (Evertse and Ferretti \cite{EF08}, Theorem 1.3). {\it Let $\delta$ be a real with $0<\delta<1$, $K$ a number field, $S$ a finite set of places of $K$ of cardinality $s$, $X$ be a projective subvariety of $\P^N$ defined over $K$ of dimension $n\ge 1$ and of degree $d$, and $f^{(v)}_0,\ldots,f^{(v)}_n\ (v\in S)$ be a system of homogeneous polynomials in ${\overline{\Q}}[x_0,\ldots, x_N]$. 
Assume that
$$X({\overline{\Q}})\cap\{f^{(v)}_0=0,\ldots,f^{(v)}_n=0\}=\emptyset\text{ for all }  v\in S.$$
Then there are homogeneous polynomials $G_1,\ldots,G_u\in K[x_0,\ldots,x_N]$ with
$$u\le A_1, \deg G_i\le A_2\text{ for }i=1,\ldots,u$$
which do not vanish identically on $X$, such that the set of $\x\in X({\overline{\Q}})$ with
\begin{align*}
&\log\left(\prod_{v\in S}\prod_{i=0}^n\max\limits_{\sigma\in G_K}\frac{|f^{(v)}_i(\sigma(\x))|_v^{1/\deg f^{(v)}_i}}{\|\sigma(\x)\|_v}\right)\le -(n+1+\delta)h(\x),\\
&h(\x)\ge A_3H 
\end{align*}
is contained in $\bigcup_{i=1}^u(X\cap\{G_i=0\})$, where $A_1,A_2,A_3,H$ are explicitly estimated.}

As we known that, there is a close relation between the Diophantine approximation and Nevanlinna theory via the dictionary of Vojta, in which Schmidt subspace theorem correspondences to the second main theorem. In order to get the a general second main theorem for arbitrary families of hypersurfaces, in \cite{Qpcf}, the author introduces the notion of distributive constant of a family of hypersurfaces. We state its reformulation as follows.

\vskip0.2cm
\noindent
\textbf{Definition B} (see \cite[Definition 3.3]{Qpcf}, reformulation) {\it Let $X$ be a subvariety of a projective variety $V$ and $\mathcal D=\{D_{1}, \ldots, D_{q}\}$ a family of $q$ divisors in $V$ such that $X\not\subset D_j\ (1\le j\le q)$. The distributive constant of the family $\mathcal D$ with respect to $X$ is defined by
$$ \delta_{\mathcal D,X}=\max_{\emptyset\ne J\subset\{1,\ldots,q\}}\frac{\sharp J}{\dim X-\dim X\cap\bigcap_{j\in J}D_j},$$
where $\dim\emptyset=-\infty$.}

We now generalize Theorem A to the following theorem.
\begin{theorem}\label{1.1}
Let $\delta$ be a real with $0<\delta<1$, $K$ a number field, $S$ a finite set of places of $K$ of cardinality $s$, $X$ a projective subvariety of $\P^N$ defined over $K$ of dimension $n\ge 1$ and of degree $d$, and $f^{(v)}_0,\ldots,f^{(v)}_m\ (v\in S)$ a systems of homogeneous polynomials in ${\overline{\Q}}[x_0,\ldots, x_N]$, where $m\ge n$. Assume that
\begin{align}\label{1.2}
\begin{split}
X({\overline{\Q}})\not\subset\{f^{(v)}_j=0\}\text{ for all }j=0,\ldots,m,v\in S,\\
X({\overline{\Q}})\cap\{f^{(v)}_0=0,\ldots,f^{(v)}_m=0\}=\emptyset\text{ for all }  v\in S
\end{split}
\end{align}
and assume also that for each $v\in S$, the distributive constant of the family of divisors $\{{\rm div} (f^{(v)}_0),$ $\ldots,{\rm div}  (f^{(v)}_m)\}$ with respect to $X$ does not exceed $\delta_X$, where $\delta_X>0$. Put $\alpha =\frac{(m+1)\delta_X}{n+\delta_X}$ and
\begin{align}
\label{1.3}
&\begin{cases}
&C :=\max\left([K(f^{(v)}_i ):K],v\in S,i=0,\ldots,m\right),\\
&\Delta := \mathrm{lcm}\left(\deg f^{(v)}_i\ :\ v\in S,i=0,\ldots,m\right),
\end{cases}\\
\label{1.4}
&\begin{cases}
&A_1:=\mathrm{exp}\left (2^{12n+4}(\alpha(n+1)+1)^{4n}\delta^{-2n}d^{2n+2}\Delta^{n(2n+2)}\right)\\
&\hspace{30pt} \times 4(m+1)s(2e(\alpha(n+1)+1)\delta^{-1})^{(m+1)s-1}\log (4C)\cdot\log\log (4C),\\
&A_2 :=(8n + 6)(\alpha(n+1)+1)^2d\Delta^{n+1}\delta^{-1},\\
&A_3 :=\mathrm{exp}\left (2^{6n+8}m(\alpha(n+1)+1)^{2n+2}\delta^{-n-1}d^{n+2}\Delta^{n(n+2)} \log(2Cs)\right ),\\
&H :=\log (N+m)+h(X)+\max\left(h(1,f^{(v)}_i), v\in S,0\le  i\le m\right).
\end{cases}
\end{align}
Then there are homogeneous polynomials $G_1,\ldots,G_u\in K[x_0,\ldots,x_N]$ with
$$u\le A_1, \deg G_i\le A_2\text{ for }i=1,\ldots,u$$
which do not vanish identically on $X$, such that the set of $\x\in X({\overline{\Q}})$ with
\begin{align}
\label{1.5}
&\log\left(\prod_{v\in S}\prod_{i=0}^m\max\limits_{\sigma\in G_K}\frac{|f^{(v)}_i(\sigma(\x))|_v^{1/\deg f^{(v)}_i}}{\|\sigma(\x)\|_v}\right)\le -\left(\alpha(n+1)+\delta\right)h(\x),\\
\label{1.6}
&h(\x)\ge A_3H
\end{align}
is contained in $\bigcup_{i=1}^u(X\cap\{G_i=0\})$.
\end{theorem}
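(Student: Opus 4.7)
The plan is to reduce Theorem \ref{1.1} to Theorem A by a selection argument driven by the distributive-constant hypothesis. The crux is to show that at each place $v\in S$ the proximity sum over all $m+1$ polynomials appearing in \eqref{1.5} is dominated, up to the factor $\alpha$, by the proximity sum over a suitable subfamily of $n+1$ polynomials having empty common zero on $X(\overline{\Q})$, so that Theorem A, applied with $\delta$ replaced by $\delta/\alpha$, produces the desired conclusion.

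I would first carry out a pigeonhole reduction on orderings: for each candidate $\x$ and each $v\in S$, sort $f^{(v)}_0,\ldots,f^{(v)}_m$ in increasing order of the local proximities $\lambda^{(v)}_i(\x):=-\log\max_{\sigma\in G_K}(|f^{(v)}_i(\sigma(\x))|_v^{1/\deg f^{(v)}_i}/\|\sigma(\x)\|_v)$. Only $((m+1)!)^s$ joint orderings can occur across $S$, so one may restrict to a single fixed ordering at the cost of the combinatorial factor visible in $A_1$. Next I would establish a selection lemma: for the fixed ordering and any $\x\in X(\overline{\Q})$, there exist indices $j_{v,0}<\cdots<j_{v,n}$ in $\{0,\ldots,m\}$ such that the $(n+1)$-tuple $(f^{(v)}_{j_{v,k}})_{k=0}^{n}$ has empty common zero on $X(\overline{\Q})$ and
$$\sum_{i=0}^m\lambda^{(v)}_i(\x)\;\le\;\alpha\sum_{k=0}^n\lambda^{(v)}_{j_{v,k}}(\x)+O(1).$$
The value $\alpha=(m+1)\delta_X/(n+\delta_X)$ is forced by an iterative filtration: starting with $X^{(0)}=X$, at each step one extracts from the remaining polynomials a maximal subfamily $J^{(\ell)}$ realising the distributive constant on $X^{(\ell)}$, replaces $X^{(\ell)}$ by the proper subvariety $X^{(\ell)}\cap\bigcap_{j\in J^{(\ell)}}\{f^{(v)}_j=0\}$, and assigns weights $\delta_X/(n+\delta_X)$ to the polynomials so eliminated; the process terminates in at most $n$ steps and leaves $n+1$ surviving polynomials whose common zero on $X$ is empty, with the weights summing to $\alpha(n+1)$. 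This is the Diophantine analogue of the Nochka-weight construction and is the main technical obstacle; it is where Definition B is used in full force.

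With the selection lemma in hand, if $\x$ satisfies \eqref{1.5}--\eqref{1.6} then the reduced $(n+1)$-tuple obeys
$$\log\prod_{v\in S}\prod_{k=0}^n\max_{\sigma\in G_K}\frac{|f^{(v)}_{j_{v,k}}(\sigma(\x))|_v^{1/\deg f^{(v)}_{j_{v,k}}}}{\|\sigma(\x)\|_v}\;\le\;-\left(n+1+\delta/\alpha\right)h(\x)+O(1),$$
and Theorem A confines such $\x$ to finitely many subvarieties $X\cap\{G_i=0\}$. The quantitative constants $A_1$, $A_2$, $A_3$ and $H$ are obtained from those in Theorem A by substituting $n+1\mapsto\alpha(n+1)+1$ (the effective number of polynomials after weighting) and $\delta\mapsto\delta/\alpha$ (the effective error), incorporating the $((m+1)!)^s$ factor from the ordering reduction, and using the uniform bound $\deg f^{(v)}_i\le\Delta$. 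Apart from the selection lemma, the remaining steps amount to a mechanical bookkeeping of constants.
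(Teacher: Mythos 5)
Your proposed reduction to Theorem A does not work as stated, and the paper takes an entirely different route. The crux of your argument is the ``selection lemma'': that for each $v$ one can choose $n+1$ indices $j_{v,0}<\cdots<j_{v,n}$ so that $f^{(v)}_{j_{v,0}},\ldots,f^{(v)}_{j_{v,n}}$ have empty common zero on $X(\overline{\Q})$ and dominate the full proximity sum up to the factor $\alpha$. Such a subfamily of the \emph{original} polynomials need not exist: already for four conics in $\P^2$ one can arrange that every triple has a common point while the full quadruple does not, so no $(n+1)$-subfamily satisfies the hypothesis of Theorem A. The filtration you describe (which is indeed the mechanism behind the distributive constant) does not select original polynomials; it forces one to replace them by \emph{linear combinations} of the defining forms, exactly as in Lemma \ref{3.1}, which the paper uses only in the qualitative Theorem \ref{1.8}. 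In the quantitative setting this substitution is fatal to your bookkeeping: the coefficients of those combinations depend on the ordering of the proximities of $\x$, hence on $\x$ itself, and one would have to control the heights, degrees, fields of definition and the \emph{number} of all combinations that can occur --- none of which is addressed, and none of which feeds into the explicit constants $A_1,A_2,A_3,H$. Your combinatorial factor $((m+1)!)^s$ also does not match the factor $(2e(\alpha(n+1)+1)\delta^{-1})^{(m+1)s-1}$ actually present in $A_1$.

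For contrast, the paper never invokes Theorem A. It follows the Evertse--Ferretti scheme directly: one lists all Galois conjugates $f_0,\ldots,f_R$ of the given polynomials, normalizes degrees via $g_i=f_i^{\Delta/\deg f_i}$, maps $X$ into $\P^R$ by $\varphi=(g_0,\ldots,g_R)$, and applies the quantitative twisted-height theorem (Theorem \ref{2.9}) to $Y=\varphi(X)$. The pigeonhole step is Lemma \ref{2.11}, which discretizes the \emph{weights} $c_{iv}$ rather than the orderings. The distributive-constant hypothesis enters only through the lower bound for the Chow weight, $e_Y(\c_{v'})\ge \frac{D(n+\delta_X)}{(m+1)\delta_X}\sum_i c_{i,v'}$ (Theorems \ref{2.1} and \ref{2.2}), which is what produces $E_Y(\c)\ge 1/(\alpha(n+1))$ and hence the constant $\alpha(n+1)+\delta$. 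If you want to salvage your approach you would essentially have to prove a quantitative, height-controlled version of Lemma \ref{3.1}; the Chow-weight route exists precisely to avoid that.
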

We note that, if the family of divisors $\{{\rm div} (f^{(v)}_0),\ldots,{\rm div}  (f^{(v)}_m)\}$ is in $m$-subgeneral position with respect to $X$ then its distributive is bounded above by $(m-n+1)$. Hence, Theorem \ref{1.1} generalizes some previous results, such as \cite[Theorem 1.3]{EF08} of Evertse-Ferretti, \cite[Theorem 1.1]{LG} of Giang and \cite[Theorem 1.1]{Qmm} of Quang.

On the other hand, from Theorem A we may deduce the following version of the subspace theorem.

\noindent
{\bf Theorem B} (Evertse-Ferretti \cite{EF08}). {\it Let $X$ be a projective variety of dimension $n$ defined over a number field $K$. Let $S$ be a finite set of places of $K$. For each $v \in S$, let $D_{0, v}, \ldots, D_{n, v}$ be effective Cartier divisors on $X$, defined over $K$, in general position. Suppose that there exists an ample Cartier divisor $A$ on $X$ and positive integers $d_{j, v}$ such that $D_{j, v}$ is linearly equivalent to $d_{j, v} A$ (which we denote by $D_{j, v} \sim d_{j, v} A$ ) for $j=0, \ldots, n$ and all $v \in S$. Then, for every $\epsilon>0$, there exists a proper Zariski closed subset $Z \subset X$ such that for all points $x \in X(K) \backslash Z$,
$$\sum_{v \in S} \sum_{j=0}^{n}d^{-1}_{i,v}\lambda_{D_{j, v}, v}(x) \leq(n+1+\epsilon) h_{A}(x).$$} 
Here, by $\lambda_{D,v}$ we denote the Weil function of the divisor $D$ with respect to the place $v$. For this definition and detailed properties, we refer readers to \cite[Chapter 10]{Lan83} or \cite[Section 8]{Voj11}. For convenience, we list here some properties of Weil functions for varieties and divisors defined over a number field $K$. Let $X$ be a normal complete variety, let $D$ be a Cartier divisor on $X$, both defined over $k$. Then the following conditions are equivalent:
\begin{itemize}
\item[(1)] $D$ is effective.
\item[(2)] $\lambda_{D}$ is bounded from below by an $M_{K}$-constant.
\item[(3)] For all $v \in M_{K}, \lambda_{D, v}$ is bounded from below.
\item[(4)] There exists $v \in M_{K}$ such that $\lambda_{D, v}$ is bounded from below.
\end{itemize}
Now, let $Y$ be a closed subscheme of $X$. Then, there exist effective Cartier divisors $D_{1}, \cdots, D_{r}$ such that
$$Y=\bigcap_{i=1}^{r} D_{i}.$$
The (local) Weil function for $Y$ with respect to $v \in M_{k}$ is defined by
$$\lambda_{Y, v}=\min _{1 \leq i \leq r}\left\{\lambda_{D_{i}, v}\right\}.$$
We refer readers to the paper \cite{Sil87} of Silverman for fundamental properties of the Weil function.

We have the definition of the Seshadri constant of a closed subscheme with respect to a nef divisor as follows.

\noindent
{\bf Definition C.} {\it Let $Y$ be a closed subscheme of a projective variety $X$ and let $\pi: \tilde{X} \rightarrow X$ be the blowing-up of $X$ along $Y$. Let $A$ be a nef Cartier divisor on $X$. The Seshadri constant $\epsilon_{Y}(A)$ of $Y$ with respect to $A$ is defined by
$$\epsilon_{Y}(A)=\sup \left\{\gamma \in \mathbb{Q}^{\geq 0} \mid \pi^{*} A-\gamma E \text { is } \mathbb{Q} \text {-nef }\right\},$$
where $E$ is an effective Cartier divisor on $\tilde{X}$ whose associated invertible sheaf is the dual of $\pi^{-1} \mathcal{I}_{Y} \cdot \mathcal{O}_{\hat{X}}$.}

\begin{definition}\label{1.7} Let $X$ be a subvariety of a projective variety $V$ and $\mathcal Y=\{Y_{1}, \ldots, Y_{q}\}$ a family of closed subschemes on $V$ such that $X\not\subset Y_j\ (1\le j\le q)$. The distributive constant of the family $\mathcal Y=\{Y_{1}, \ldots, Y_{q}\}$ with respect to a subvariety $X\subset V$ is defined by
$$ \delta_{\mathcal Y,X}=\max_{\emptyset\ne J\subset\{1,\ldots,q\}}\max\left\{1;\frac{\sharp J}{\dim X-\dim X\cap\bigcap_{j\in J}Y_j}\right\},$$
where $\dim\emptyset=-\infty$.
\end{definition}

Recently, Heier and Levin generalized Theorem B to the case where the families of divisors are replaced by the families of closed subschemes in general position (see \cite[Theorem 1.3]{HL}). Our second aim in this paper is to generalize these results to the case of arbitrary families of closed subschemes. Our result is stated as follows.
\begin{theorem}\label{1.8}
 Let $X$ be a projective variety of dimension $n$ defined over a number field $k$. Let $S$ be a finite set of places of $k$. For each $v \in S$, let $Y_{0, v}, \ldots, Y_{q, v}$ be closed subschemes on $X$, defined over $k$ such that the distributive constant of the family $Y_{0, v}, \ldots, Y_{q, v}$ with respect to $X$ does not exceed $\delta_X$, where $\delta_X>0$. Let $A$ be an ample Cartier divisor on $X$. Then, for every $\epsilon>0$, there exists a proper Zariski closed subset $Z \subset X$ such that for all points $x \in X(k) \backslash Z$,
$$\sum_{v \in S} \sum_{j=0}^{q}\epsilon_{Y_{j,v}}(A)\lambda_{Y_{j, v}, v}(x) \leq(\delta_X(n+1)+\epsilon) h_{A}(x).$$
\end{theorem}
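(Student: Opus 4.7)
The plan is to mirror the argument of Heier--Levin~\cite{HL}, replacing their general-position hypothesis with the distributive constant input from Definition~\ref{1.7}. Two ingredients are needed: a Seshadri--style approximation that replaces each term $\epsilon_{Y_{j,v}}(A)\lambda_{Y_{j,v},v}$ by Weil functions of Cartier divisors linearly equivalent to a multiple of $A$; and a point-wise reordering lemma attached to the distributive constant (analogous to the one used in~\cite{Qpcf} for divisors) that dominates the full sum by $\delta_X$ times a subsum supported on at most $n+1$ indices whose intersection with $X$ is empty.

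Fix $\epsilon>0$ and a small auxiliary $\eta>0$. For each $Y_{j,v}$ and each sufficiently large integer $N$ I would produce effective Cartier divisors $D_{j,v}^{(1)},\ldots,D_{j,v}^{(r_{j,v})}$ with $D_{j,v}^{(i)}\sim NA$, $Y_{j,v}=\bigcap_i D_{j,v}^{(i)}$ set-theoretically, and
$$N\,\epsilon_{Y_{j,v}}(A)\,\lambda_{Y_{j,v},v}(x)\le \min_i\lambda_{D_{j,v}^{(i)},v}(x)+\eta N\,h_A(x)+O(1),$$
using the blow-up construction that defines $\epsilon_Y(A)$ together with a moving-type lemma (this is exactly the input used in~\cite{HL}). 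Because each $Y_{j,v}$ equals the set-theoretic intersection of its $D_{j,v}^{(i)}$, every dimension $\dim X\cap\bigcap_{j\in J}Y_{j,v}$ entering Definition~\ref{1.7} is preserved, so the auxiliary divisor family still has distributive constant $\le\delta_X$ with respect to $X$.

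For fixed $v\in S$ and $x\in X(k)$, reindex so that $\lambda_{Y_{0,v},v}(x)\ge\cdots\ge\lambda_{Y_{q,v},v}(x)$ and extract from the descending chain $X\supsetneq X\cap Y_{0,v}\supsetneq X\cap Y_{0,v}\cap Y_{1,v}\supsetneq\cdots$ the pivot indices $k_0<k_1<\cdots<k_t$ at which the dimension strictly drops. Since the dimension starts at $n$ and eventually reaches $-\infty$ (the finiteness of $\delta_{\mathcal Y,X}$ forces the final intersection to be empty), one has $t+1\le n+1$. Applying the distributive constant bound to each prefix $\{0,1,\ldots,k_i\}$ and summing by parts then yields
$$\sum_{j=0}^{q}\epsilon_{Y_{j,v}}(A)\lambda_{Y_{j,v},v}(x)\le \delta_X\sum_{i=0}^{t}\epsilon_{Y_{k_i,v}}(A)\lambda_{Y_{k_i,v},v}(x)+O(1),$$
while the extracted subfamily satisfies $\bigcap_i Y_{k_i,v}\cap X=\emptyset$, so (after padding with dummy subschemes if $t<n$) it becomes an $(n+1)$-tuple in general position on $X$. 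There are only finitely many such configurations $\mathcal C=(k_0(v),\ldots,k_n(v))_{v\in S}$, so it suffices to treat one $\mathcal C$ at a time.

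Applying the Heier--Levin theorem~\cite[Theorem~1.3]{HL} for general-position closed subschemes to the selected family $\{Y_{k_i,v}\}_{v\in S,\,0\le i\le n}$ produces a proper Zariski closed $Z_{\mathcal C}\subsetneq X$ outside of which
$$\sum_{v\in S}\sum_{i=0}^{n}\epsilon_{Y_{k_i,v}}(A)\lambda_{Y_{k_i,v},v}(x)\le\bigl(n+1+\epsilon/\delta_X\bigr)h_A(x);$$
substituting into the previous display and setting $Z=\bigcup_{\mathcal C}Z_{\mathcal C}$ concludes the proof. The main obstacle I expect is the chain-and-regrouping step: it must be carried out for closed subschemes rather than divisors, so one either proves a subscheme version of the key lemma of~\cite{Qpcf} directly (using that $\lambda_{Y,v}$ is the minimum over Weil functions of the defining divisors and that intersection dimensions are well-behaved), or first runs the Seshadri approximation of Step~1 and then invokes the divisor lemma. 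A secondary bookkeeping point is uniform control of the Seshadri slack $\eta N h_A(x)$ across the finitely many configurations $\mathcal C$, which is routine once $\eta$ is chosen small compared to $\epsilon/((q+1)|S|\delta_X)$.
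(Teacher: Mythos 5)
Your outline follows the same broad strategy as the paper (Seshadri approximation by divisors linearly equivalent to a multiple of $A$, then a reordering/pivot argument controlled by the distributive constant), but two of your key steps have genuine gaps. First, the assertion that the extracted pivot family $\{Y_{k_0,v},\ldots,Y_{k_t,v}\}$ ``becomes an $(n+1)$-tuple in general position'' because its total intersection with $X$ is empty is false: the pivot construction only controls the dimensions of the \emph{prefix} intersections $X\cap Y_{k_0,v}\cap\cdots\cap Y_{k_i,v}$, whereas general position requires $\dim\bigcap_{j\in J}Y_{k_j,v}\le n-\sharp J$ for \emph{every} subset $J$; e.g.\ $Y_{k_1,v}\cap Y_{k_3,v}$ alone may still have dimension $n-1$. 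Consequently you cannot invoke Heier--Levin's theorem on that subfamily. The paper avoids this by first converting to divisors, embedding $X$ by $N(1+\delta)A$, and then using Lemma \ref{3.1} to replace the pivot hyperplanes by \emph{generic linear combinations} $\tilde L_{s,v}\in{\rm span}(\tilde H_{I^v(0),v},\ldots,\tilde H_{I^v(t_s),v})$ which genuinely are in general position and whose Weil functions dominate those of the pivots; it then applies Theorem B (Evertse--Ferretti for divisors), not the subscheme theorem.

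Second, your claim that the distributive constant is preserved under the Seshadri approximation because $Y_{j,v}=\bigcap_i D_{j,v}^{(i)}$ set-theoretically does not do the required work. When you bound $\sum_j\min_i\lambda_{D_{j,v}^{(i)},v}$ you must eventually select one divisor per subscheme, and the intersection $\bigcap_j D_{j,v}^{(i_j)}$ can be far larger than $\bigcap_j Y_{j,v}$, so the dimensions entering Definition \ref{1.7} are \emph{not} preserved. This is precisely the point of the paper's inductive construction of the divisors $F_{j,v}$: the section $s_{j,v}$ is chosen so as not to vanish identically on any component of the partial intersections already built, and a dimension-by-dimension argument then shows the distributive constant does not increase at each replacement step. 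Relatedly, your displayed regrouping inequality with the varying weights $\epsilon_{Y_{j,v}}(A)$ is not justified by ordering the $\lambda$'s alone (Lemma \ref{3.2} needs all terms on an equal footing); the paper first absorbs the Seshadri constants into the divisors $F_{j,v}\sim N(1+\delta)A$ via $N\epsilon_{Y_{j,v}}(A)\lambda_{Y_{j,v},v}\le\lambda_{F_{j,v},v}$ and only then reorders. You do flag the alternative of ``running the Seshadri approximation first and then invoking the divisor lemma,'' which is indeed the correct order of operations, but the divisor lemma you would need is exactly the nontrivial inductive construction sketched above, not a formal consequence of set-theoretic equality of intersections.
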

By simple computation, we see that if the family of closed subschemes $\{Y_{1,v},\ldots,Y_{q,v}\}$ is in $l$-subgeneral position with respect to $X$ then its distributive is bounded above by $(l-n+1)$. Therefore, Theorem \ref{1.8} also generalizes the recent result of He and Ru \cite[Main Theorem]{HR}.

\section{Quantitative Schmidt's subspace theorem}

In this section, we will prove Theorem \ref{1.1}. We need the following preparations.

\subsection{Chow weights}
We recall some notion and results from \cite{EveFer02,EF08}. 
Let $X$ be a projective subvariety of $\P^N$ defined over $K$ of dimension $n$ and degree $\Delta_X$. The Chow form of $X$ is the unique polynomial, up to a constant scalar, 
$$F_X(\textbf{u}_0,\ldots,\textbf{u}_n) = F_X(u_{00},\ldots,u_{0N};\ldots; u_{n0},\ldots,u_{nN})$$
in $N+1$ blocks of variables $\textbf{u}_i=(u_{i0},\ldots,u_{iN}), i = 0,\ldots,n$ with the following
properties: 
\begin{itemize}
\item $F_X$ is irreducible in $K[u_{00},\ldots,u_{nN}]$;
\item $F_X$ is homogeneous of degree $\delta$ in each block $\textbf{u}_i, i=0,\ldots,n$;
\item $F_X(\textbf{u}_0,\ldots,\textbf{u}_n) = 0$ if and only if $X\cap H_{\textbf{u}_0}\cap\cdots\cap H_{\textbf{u}_n}$ contains a $\overline{K}$-rational point, where $H_{\textbf{u}_i}, i = 0,\ldots,n$, is the hyperplane given by $u_{i0}x_0+\cdots+ u_{iN}x_N=0.$
\end{itemize}
Let ${\bf c}=(c_0,\ldots, c_N)$ be a tuple of real numbers and let $t$ be an auxiliary variable. We write
\begin{align*}
F_X(t^{c_0}u_{00},&\ldots,t^{c_N}u_{0N};\ldots ; t^{c_0}u_{n0},\ldots,t^{c_N}u_{nN})\\ 
& = t^{e_0}G_0(\textbf{u}_0,\ldots,\textbf{u}_N)+\cdots +t^{e_r}G_r(\textbf{u}_0,\ldots, \textbf{u}_N),
\end{align*}
with $G_0,\ldots,G_r\in K[u_{00},\ldots,u_{0N};\ldots; u_{n0},\ldots,u_{nN}]$ and $e_0>e_1>\cdots>e_r$. The Chow weight of $X$ with respect to ${\bf c}$ is defined by
\begin{align*}
e_X({\bf c}):=e_0.
\end{align*}

\begin{theorem}[{see \cite[Theorem 3.2]{Qjnt}}]\label{2.1}
Let $Y$ be a projective subvariety of $\P^R$ defined over $K$ of dimension $n\ge 1$ and degree $\Delta_Y$. Let $m\ (m\ge n)$ be an integer and let ${\bf c}=(c_0,\ldots,c_R)$ be a tuple of non-negative reals. Let $H_0,\ldots,H_R$ be hyperplanes in $\P^R$ defined by $H_{i}=\{y_{i}=0\}\ (0\le i\le R)$. Let $\{i_0,\ldots, i_m\}$ be a subset of $\{0,\ldots,R\}$ such that:
\begin{itemize}
\item[(1)] $c_{i_m}=\min\{c_{i_0},\ldots,c_{i_m}\}$,
\item[(2)] $Y(\overline{K})\cap\bigcap_{j=0}^{m-1}H_{i_j}\ne\emptyset$, 
\item[(3)] and $Y\not\subset H_{i_j}$ for all $j=0,\ldots,m$.
\end{itemize}
Let $\delta_Y$ be the distributive constant of the family $\{H_{i_j}\}_{j=0}^m$ with respect to $Y$. Then
$$e_Y({\bf c})\ge \frac{\Delta_Y}{\delta_Y}(c_{i_0}+\cdots+c_{i_m}).$$
\end{theorem}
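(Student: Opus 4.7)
The plan is to build a greedy flag of subvarieties adapted to the ordering of the weights $(c_{i_j})$, invoke a standard flag-style Chow-weight lower bound, and convert the resulting partial sum into the full sum $\sum_{j=0}^{m} c_{i_j}$ via the distributive constant together with an Abel summation. After relabeling $i_0,\dots,i_{m-1}$ (permissible by (1), which only fixes $i_m$ as a minimizer), I would assume $c_{i_0}\ge c_{i_1}\ge\cdots\ge c_{i_m}$. Starting from $Y_0:=Y$, I would greedily pick indices $0=\ell_0<\ell_1<\cdots<\ell_n$ in $\{0,\dots,m\}$, taking $\ell_k$ to be the smallest $\ell>\ell_{k-1}$ such that $Y_k:=Y\cap H_{i_{\ell_0}}\cap\cdots\cap H_{i_{\ell_{k-1}}}\not\subset H_{i_\ell}$. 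Condition (3) forces $\ell_0=0$, while condition (2) together with $\dim Y=n$ ensures the construction reaches $k=n$ with $\dim Y_k=n-k$ throughout.

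The key combinatorial estimate is then $\ell_k\le k\delta_Y$. Setting $S_k:=\{j\in\{0,\dots,m\}:Y_k\subset H_{i_j}\}$, every index skipped by the greedy rule before step $k$ lies in $S_k$ (it was skipped precisely because $Y_t\subset H_{i_j}$ for some $t\le k$, and then $Y_k\subset Y_t$), so $\{0,1,\dots,\ell_k-1\}\subset S_k$ and hence $\ell_k\le|S_k|$. On the other hand $Y\cap\bigcap_{s\in S_k}H_{i_s}=Y_k$ has codimension exactly $k$ in $Y$, so the very definition of the distributive constant yields $|S_k|\le k\delta_Y$.

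I would next invoke the flag-type Chow-weight lower bound
\[
e_Y({\bf c})\ge \Delta_Y\sum_{k=0}^n c_{i_{\ell_k}},
\]
which follows from a leading-exponent analysis of the $t$-expansion of $F_Y(t^{c_0}u_{j0},\dots,t^{c_N}u_{jN})$ along the greedy flag (standard, cf.\ \cite{EveFer02,EF08}). To convert this partial sum to the full sum, I would partition $\{0,\dots,m\}$ into blocks $[\ell_k,\ell_{k+1})$ with the convention $\ell_{n+1}:=m+1$; monotonicity of the $c_{i_\cdot}$ gives
\[
\sum_{j=0}^m c_{i_j}\le\sum_{k=0}^n(\ell_{k+1}-\ell_k)\,c_{i_{\ell_k}}.
\]
A short Abel summation combined with $\ell_k\le k\delta_Y$ and the inequality $(n+1)\delta_Y\ge m+1$ (the latter following from $m\ge n$ together with $\delta_Y\ge m/n$, a direct consequence of (2)) shows that the right-hand side is at most $\delta_Y\sum_{k=0}^n c_{i_{\ell_k}}$, completing the proof.

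The main obstacle is the flag-type Chow-weight lower bound itself; once it is in hand, the rest (greedy construction, dimension bookkeeping, Abel summation) is elementary. One technical point worth noting: if the greedy procedure were to stop before reaching $k=n$, one would have $Y\cap\bigcap_{j=0}^m H_{i_j}\supset Y_k\ne\emptyset$, which forces $\delta_Y\ge(m+1)/k$; in that degenerate case, the desired inequality follows from the analogous (shorter) flag bound together with the larger value of $\delta_Y$.
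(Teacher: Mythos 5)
The paper itself gives no proof of this statement (it is quoted from \cite[Theorem 3.2]{Qjnt}), so your attempt can only be judged on its own merits. Your combinatorial skeleton is fine: the greedy flag, the estimate $\ell_k\le |S_k|\le k\delta_Y$, the Abel summation, and the inequality $(n+1)\delta_Y\ge m+1$ coming from hypothesis (2) all hold together correctly (indeed, the bound $\ell_k\le k\delta_Y$ survives even if the dimension fails to drop at some step). The fatal problem is the analytic input. The ``flag-type Chow-weight lower bound'' $e_Y({\bf c})\ge \Delta_Y\sum_{k=0}^n c_{i_{\ell_k}}$ is \emph{false} as stated. The genuine Evertse--Ferretti lemma requires $Y\cap H_{j_0}\cap\cdots\cap H_{j_n}=\emptyset$, and your greedy rule only guarantees $Y_k\not\subset H_{i_{\ell_k}}$; once $Y_k$ is reducible or zero-dimensional, non-containment does not give empty intersection, and hypothesis (2) of the theorem explicitly permits all the hyperplanes to share a common point of $Y$, so no choice of flag can repair this in general.

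Concretely, take $Y=\{(s^2:t^2:st)\}\subset\P^2$ (the conic $y_0y_1=y_2^2$, $n=1$, $\Delta_Y=2$), $m=1$, $(i_0,i_1)=(2,0)$ and ${\bf c}=(1,0,1)$. All hypotheses of the theorem hold, $\delta_Y=2$, and the greedy flag is exactly $\ell_0=0$, $\ell_1=1$: one checks $Y\cap H_2=\{(1{:}0{:}0),(0{:}1{:}0)\}\not\subset H_0$, so the flag is complete, yet $Y\cap H_2\cap H_0=\{(0{:}1{:}0)\}\ne\emptyset$. The Chow form is
\begin{equation*}
F_Y=(u_{00}u_{11}-u_{01}u_{10})^2-(u_{00}u_{12}-u_{02}u_{10})(u_{02}u_{11}-u_{01}u_{12}),
\end{equation*}
whence $e_Y({\bf c})=\max(2c_0+2c_1,\,c_0+c_1+2c_2)=3$, while your flag bound asserts $e_Y({\bf c})\ge 2(c_2+c_0)=4$. (The theorem's conclusion, $e_Y({\bf c})\ge \tfrac{2}{2}(c_2+c_0)=2$, is of course still true.) The failure is exactly the multiplicity issue your ``leading-exponent analysis'' glosses over: only the points of $Y\cap H_{i_{\ell_0}}$ lying outside the next hyperplane contribute the next weight, so one cannot multiply by the full degree $\Delta_Y$ at each step. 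The same objection applies to the ``shorter flag bound'' you invoke in the degenerate case. To repair the argument you would have to work with the empty-intersection version of the Chow-weight lemma (or with the finer inductive formula $e_Y({\bf c})\ge \Delta_Y c_j+e_{Y\cap H_j}({\bf c})$ and a careful point-by-point analysis in dimension zero), which changes the combinatorics substantially; as written, the central inequality of your proof is simply not true.
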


Applying the above theorem, we have the following theorem.

\begin{theorem}\label{2.2}
Let $Y$ be a projective subvariety of $\P^R$ defined over $K$ of dimension $n\ge 1$ and degree $\Delta_Y$. Let $m\ (m\ge n)$ be an integer and let ${\bf c}=(c_0,\ldots,c_R)$ be a tuple of non-negative reals. Let $H_0,\ldots,H_R$ be hyperplanes in $\P^R$ defined by $H_{i}=\{y_{i}=0\}\ (0\le i\le R)$. Let $\{i_0,\ldots, i_m\}$ be a subset of $\{0,\ldots,R\}$ such that:
$$Y(\overline{K})\cap\bigcap_{j=0}^{m}H_{i_j}=\emptyset\text{  and }Y\not\subset H_{i_j}\text{ for all }j=0,\ldots,m.$$
Assume that the family $\{H_{i_j}\}_{j=0}^m$ has the distributive constant with respect to $Y$ at most $\delta$. Then
$$e_Y({\bf c})\ge \frac{\Delta_Y(n+\delta)}{(m+1)\delta}(c_{i_0}+\cdots+c_{i_m}).$$
\end{theorem}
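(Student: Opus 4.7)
My plan is to induct on $m\ge n$, at each step reducing to Theorem \ref{2.1}. First I observe that the target bound is weakest when $\delta$ is largest, since $\delta\mapsto \frac{n+\delta}{(m+1)\delta}=\frac{1}{m+1}\bigl(\frac{n}{\delta}+1\bigr)$ is decreasing in $\delta$; hence I may assume that $\delta$ equals the actual distributive constant of the family. After relabelling I arrange $c_{i_0}\ge c_{i_1}\ge\cdots\ge c_{i_m}$ and set $S_t:=c_{i_0}+\cdots+c_{i_t}$, so the sorting automatically gives $S_t\ge \frac{t+1}{m+1}S_m$.

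For the \emph{base case} $m=n$ I first claim that the hypotheses force $\delta=1$. Indeed, if some non-empty $J\subset\{0,\ldots,n\}$ satisfied $|J|+\dim(Y\cap\bigcap_{j\in J}H_{i_j})>n$, then the remaining $n+1-|J|$ hyperplanes could drop the dimension by at most that number, so $Y\cap\bigcap_{j=0}^{n}H_{i_j}$ would still be non-empty, contradicting the hypothesis. The projective dimension theorem then gives $Y\cap\bigcap_{j=0}^{n-1}H_{i_j}\ne\emptyset$, so Theorem \ref{2.1} applied to the sorted subset yields $e_Y(\mathbf{c})\ge \Delta_Y S_n$, which matches the required bound when $\delta=1$.

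For the \emph{inductive step} $m\ge n+1$ I split on whether $Y\cap\bigcap_{j=0}^{m-1}H_{i_j}$ is empty. If it is non-empty, Theorem \ref{2.1} applied to the full sorted subset $\{i_0,\ldots,i_m\}$ gives $e_Y(\mathbf{c})\ge \frac{\Delta_Y}{\delta}S_m$, and it suffices to show $m+1\ge n+\delta$. Choosing a subset $J$ of cardinality $k$ with $d:=\dim(Y\cap\bigcap_{j\in J}H_{i_j})$ realising $\delta=k/(n-d)$, the emptiness of the total intersection together with the fact that each further hyperplane can lower the dimension by at most one forces at least $d+1$ cuts from the remaining $m+1-k$ hyperplanes, so $m\ge k+d=\delta n-d(\delta-1)$. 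Minimising the right side over $d\in\{0,\ldots,n-1\}$ (the minimum being $n+\delta-1$, attained at $d=n-1$) gives $m+1\ge n+\delta$, as desired.

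If instead $Y\cap\bigcap_{j=0}^{m-1}H_{i_j}=\emptyset$, let $\delta_1\le\delta$ be the distributive constant of the subfamily $\{H_{i_j}\}_{j=0}^{m-1}$ with respect to $Y$. The inductive hypothesis yields $e_Y(\mathbf{c})\ge\frac{\Delta_Y(n+\delta_1)}{m\delta_1}S_{m-1}$, and combining this with $S_{m-1}\ge\frac{m}{m+1}S_m$ together with the elementary inequality $(n+\delta_1)/\delta_1\ge(n+\delta)/\delta$ (equivalent to $\delta\ge\delta_1$, using $n\ge 1$) produces the target $\frac{\Delta_Y(n+\delta)}{(m+1)\delta}S_m$. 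The main obstacle is the structural estimate $m+1\ge n+\delta$ in the non-empty subcase, which requires carefully tracking how the distributive constant interacts with the emptiness hypothesis; the rest is a clean bookkeeping reduction to Theorem \ref{2.1} via induction.
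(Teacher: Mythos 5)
Your proof is correct and, once the induction is unrolled, coincides with the paper's argument: both reduce to the minimal prefix $\{H_{i_0},\ldots,H_{i_l}\}$ (after sorting so that $c_{i_0}\ge\cdots\ge c_{i_m}$) whose intersection with $Y$ is empty, apply Theorem \ref{2.1} to that prefix, and then combine the sorting inequality $c_{i_0}+\cdots+c_{i_l}\ge\frac{l+1}{m+1}(c_{i_0}+\cdots+c_{i_m})$ with the structural bound (distributive constant of the prefix) $\le l+1-n$ and the monotonicity of $x\mapsto(n+x)/x$. The only differences are presentational (induction versus a direct choice of $l$), together with the welcome fact that you actually prove the structural estimate via the dimension-drop argument, whereas the paper simply asserts $\delta'\le l-n+1$.
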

\begin{proof}
Without loss of generality, we may assume that $c_{i_0}\ge\cdots\ge c_{i_m}$ and $Y(\overline{K})\cap\bigcap_{j=0}^{l-1}H_{i_j}\ne\emptyset, Y(\overline{K})\cap\bigcap_{j=0}^{l}H_{i_j}=\emptyset$ for an index $l,\ 1\le l\le m$.
Let $\delta'$ be the distributive constant of the family $\{H_{i_j}\}_{j=0}^l$ with respect to $Y$. Then by Theorem \ref{2.1}, we have
\begin{align*}
e_Y({\bf c})&\ge \frac{\Delta_Y}{\delta'}(c_{i_0}+\cdots+c_{i_l})\\
&\ge \frac{\Delta_Y(l+1)}{(m+1)\delta'}(c_{i_0}+\cdots+c_{i_m})\\
[\text{since }\delta'\le l-n+1]\ \ &\ge \frac{\Delta_Y(n+\delta')}{(m+1)\delta'}(c_{i_0}+\cdots+c_{i_m})\\
[\text{since }\delta'\le \delta]\ \ &\ge \frac{\Delta_Y(n+\delta)}{(m+1)\delta}(c_{i_0}+\cdots+c_{i_m}).
\end{align*}
The theorem is proved.
\end{proof}

\subsection{Twisted height}
Let $K$ be a number field and $L$ be a finite extension of $K$. If $w$ is a place of $L$ which lies above a place $v$ of $K$, then
$$|x|_w=|x|^{d(w|v)}_v, \text{ for } x\in K\text{ with } d(w|v)=\frac{[L_w : K_v]}{[L:K]}.$$
For $v\in M_K$, let $\c_v=(c_{0v},\ldots, c_{Rv})$ be a tuple of reals such that $c_{0v}=\cdots=c_{Rv}=0$ for all but finitely many places $v\in M_K$ and put $\c=(\c_v\ :\ v\in M_K)$. Further, let $Q\ge 1$ be a real. 
For $\y=(y_0,\ldots,y_R)\in\P^R(K)$, we define the twisted height of $\y$ by
$$H_{Q,\c}(\y):=\prod_{v\in M_K}\max\limits_{0\le i\le R}\left (|y_i|_vQ^{c_{iv}}\right).$$

For any finite extension $L$ of $K$ we put
$$c_{iw}:=c_{iv}.d(w|v) \text{ for } w\in M_L,$$
where $M_L$ is the set of places of $L$ and $v$ is the place of $K$ lying below $w.$ The twisted height of $\y\in\P^R({\overline{\Q}})$ is defined by
\begin{align}\label{2.3}
H_{Q,\c}(\y):=\prod_{w\in M_L}\max\limits_{0\le i\le R}\left(|y_i|_wQ^{c_{iw}}\right),
\end{align}
where $L$ is any finite extension of $K$ such that $\y\in \P^R(L)$. 

\subsection{Some auxiliary results} Let $Y$ be a projective subvariety of $\P^R$ of dimension $n\ge 1$ and degree $D$, defined over $K$, and let $c_v=(c_{0v},\ldots,c_{Rv})\ (v\in M_K)$ be tuples of reals such that
\begin{align}
\label{2.4}
&c_{iv}\ge 0\text{ for }v\in M_K,i=0,\ldots , R,\\ 
\label{2.5}
&c_{0v}=\cdots=c_{Rv}=0 \text{ for all but finitely many }v\in M_K,\\
\label{2.6}
&\sum_{v\in M_K}\max\{c_{0,v},\ldots,c_{R,v}\}\le 1. 
\end{align}
Put
\begin{align}\label{2.7}
E_Y({\bf c}):=\frac{1}{(n+1)D}\left (\sum_{v\in M_K}e_Y({\bf c}_v)\right),
\end{align}
where $e_Y(\c_v)$ is the Chow weight defined in Section 2.

For $0<\delta\le 1$, we put
\begin{align}\label{2.8}
\begin{split}
\begin{cases}
B_1&:=\mathrm{exp}\left (2^{10n+4}\delta^{-2n}D^{2n+2}\right).\log (4R)\log\log (4R),\\
B_2&:=(4n+3)D\delta^{-1},\\
B_3&:=\exp\left(2^{5n+4}\delta^{-n-1}D^{n+2}\log (4R)\right).
\end{cases}
\end{split}
\end{align}
\begin{theorem}[{see  \cite[Theorem 2.1]{EF08}}]\label{2.9}
There are homogeneous polynomials $F_1,\ldots,F_t\in K[y_0,\ldots,y_R]$ with
$$t\le  B_1, \deg F_i\le  B_2 \text{ for }i = 1,\ldots,t,$$
which do not vanish identically on $Y,$ such that for every real number $Q$ with
$$\log Q \ge B_3.(h(Y)+1)$$
there is $F_i\in\{F_1,\ldots,F_t\}$ with
$$\left\{y\in Y({\overline{\Q}})\ :\ H_{Q,\c}({\bf y})\le Q^{E_Y({\bf c})-\delta}\right\}\subset Y\cap\{F_i=0\}.$$
\end{theorem}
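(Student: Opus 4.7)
The plan is to follow the Evertse--Ferretti quantitative framework: translate the twisted height estimate into a statement about the successive minima of an adelic convex body, then use the Chow weight to bound those minima, and finally convert failure of the bound into membership in the zero set of one of finitely many auxiliary polynomials. The starting observation is that $H_{Q,\c}(\y)$ is just the adelic norm attached to the weighted linear forms $Q^{c_{iv}}y_i$, so for each $v$ the convex body $C_v(Q)=\{\u\in K_v^{R+1}:|u_i|_v\le Q^{-c_{iv}}\}$, intersected with the affine cone over $Y$, has successive minima whose product behaves (after summing over $v$) like $Q^{-E_Y(\c)\cdot(n+1)D}$ up to a Chow--form computation. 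The Chow weight $e_Y(\c_v)$ is precisely what measures this product.

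First, I would fix an integer $M$ of size roughly $B_2$, consider the graded piece of the homogeneous ideal of $Y$ in degree $M$, and use an absolute Siegel lemma of Roy--Thunder / Zhang type to extract a basis whose elements are small with respect to the $Q$-twisted norms. The double logarithmic factor $\log(4R)\log\log(4R)$ in $B_1$ is exactly the penalty coming from this absolute Siegel lemma applied to the ambient monomial space of dimension polynomial in $R$. Next, by a quantisation argument, I would replace the continuously varying weight tuple $\c$ by one of finitely many "types" (a finite cover of the simplex cut out by (\ref{2.4})--(\ref{2.6})); each type produces, via the absolute Siegel lemma and the Chow weight estimate (Theorem \ref{2.2} applied to a monomial embedding of $Y$), a single auxiliary polynomial $F_i$ of degree at most $B_2$ not vanishing on $Y$. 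Adding up over types gives $t\le B_1$.

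The second half is to show that for $\log Q\ge B_3(h(Y)+1)$ any $\y\in Y(\overline\Q)$ with $H_{Q,\c}(\y)\le Q^{E_Y(\c)-\delta}$ must lie on one of these $F_i$. This uses the index--calculation / Roth-type step: expand $F_i$ as a combination of the small-norm monomial basis and estimate $|F_i(\y)|_v$ place by place using the definition of the twisted height and the Chow weight estimate; if $\y$ evaded every $F_i$, the product formula would give a contradiction once $Q$ is large enough that the error terms (bounded by a constant times $h(Y)$) are absorbed by the $\delta$ gain. The threshold $\log Q\ge B_3(h(Y)+1)$ is exactly what is needed to make the product formula inequality strict.

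The main obstacle is the explicit bookkeeping of constants. Qualitatively, the statement is a standard consequence of Schmidt's method coupled with the Chow weight formalism, but the explicit towers in $B_1$ and $B_3$, and the polynomial $B_2$, require careful control of the parameters through (a) the absolute Siegel lemma (where the $2^{10n+4}$ and $\delta^{-2n}D^{2n+2}$ factors arise from the dimension $\binom{R+B_2}{B_2}$ and the height of the monomial embedding), (b) the quantisation of weight tuples (contributing the remaining combinatorial factors), and (c) the Roth-type product estimate (contributing the exponential in $B_3$). Getting every dependence on $R$ into double-logarithmic form, rather than polynomial, is the most delicate point and is what distinguishes the Evertse--Ferretti bound from earlier versions; I would reproduce this by invoking their absolute Siegel lemma at the exact point where the covering argument is applied, rather than beforehand.
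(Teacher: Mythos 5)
First, a point about the comparison you are being measured against: the paper does not prove Theorem \ref{2.9} at all. It is imported verbatim as Theorem 2.1 of Evertse--Ferretti \cite{EF08} and used as a black box in part (d) of the proof of Theorem \ref{1.1}. So the only meaningful benchmark is the original proof in \cite{EF08}, and against that your text is a plan rather than a proof: every quantitative claim in the statement (the values of $B_1,B_2,B_3$ and the uniformity in $Q$) is asserted, not derived.

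Two of your structural choices would not survive an attempt to carry them out. (i) The bridge between the Chow weight $e_Y(\c_v)$ and actual polynomials vanishing on points of small twisted height is the \emph{Hilbert weight}: one fixes a degree $M$ comparable to $B_2$, works in the $M$-th graded piece of the coordinate ring of $Y$, and uses the Mumford/Evertse--Ferretti comparison between $s_Y(M,\c)/(MH_Y(M))$ and $e_Y(\c)/((n+1)D)$ (from \cite{EveFer02}) to produce a monomial basis of large total $\c$-weight. You never mention the Hilbert weight; the Chow form does not act on points of $Y$, so your claim that the successive minima of the adelic bodies ``behave like $Q^{-E_Y(\c)(n+1)D}$'' has no route to a proof without this intermediate object. (ii) The $Q$-uniform finiteness --- at most $B_1$ polynomials of degree at most $B_2$ working for \emph{every} $Q$ above the threshold --- comes from feeding those weighted monomials into the quantitative Absolute Parametric Subspace Theorem of Evertse--Schlickewei; the factor $\log(4R)\log\log(4R)$ and the tower $\exp(2^{10n+4}\delta^{-2n}D^{2n+2})$ are inherited from that theorem's count of exceptional subspaces. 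Your scheme of ``one polynomial per quantized weight type plus a Roth-type product-formula contradiction'' gives no mechanism for making the exceptional set independent of $Q$, and it misplaces the quantization step: in Theorem \ref{2.9} the tuple $\c$ is \emph{fixed}, and the covering of the weight simplex (Lemma \ref{2.11}) belongs to the deduction of Theorem \ref{1.1} from Theorem \ref{2.9}, not to the proof of Theorem \ref{2.9} itself --- which is why no $(\cdot)^{(m+1)s-1}$ factor appears in $B_1$. The ingredients you name are the right ones to look up, but as written the argument has a genuine gap at both of the points that carry the content of the theorem.
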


For a polynomial $f$, we write $f=\sum_{m\in M_f}c_f(m)m$, where the symbol $m$ denotes a monomial, $M_f$ is a finite set of monomials, and $c_f(m)\ (m\in M_f)$ are the coefficients. For any map $\sigma$ on the definition field of $f$, we put
$$ \sigma (f):=\sum_{m\in M_f}\sigma (c_f(m))\cdot m .$$
Let $f_i=\sum_{m\in M_{f_i}}c_{f_i}(m)\cdot m\ (i=1,\ldots,r)$ be $r$ polynomials with complex coefficients. We define the following norms:
$$\|f_1,\ldots,f_r\|:=\max\left (|c_{f_i}(m)|\ : 1\le i \le r,m\in M_{f_i}\right ),$$
$$\|f_1,\ldots, f_r\|_1:=\sum_{i=1}^r\sum_{m\in M_{f_i}}|c_{f_i}(m)|.$$
If all coefficients of $f_i\ (i=1,...,r)$ belong to ${\overline{\Q}}$, we define
\begin{align}\label{2.10}
\begin{split}
\begin{cases}
&\|f_1,\ldots, f_r\|_v:=\max\left(|c_{f_i}(m)|_v\ :\ 1\le i \le r,m\in M_{f_i}\right)\ (v\in M_K),\\
&\|f_1,\ldots, f_r\|_{v,1}:=\|f_1,\ldots ,f_r\|_v\ (v\in M^0_K),\\
&\|f_1,\ldots , f_r\|_{v,1}:= \|\sigma_v(f_1),\ldots ,\sigma_v(f_r)\|_1^{[K_v:R]/[K:Q]}\  (v\in M^\infty_K),
\end{cases}
\end{split}
\end{align}
where $\sigma_v$ (for $v\in M^\infty_K$) is the isomorphic embedding $\sigma_v:{\overline{\Q}}\rightarrow\C$ such that
$$|x|_v=|\sigma_v(x)|^{[K_v:\R]/[K:\Q]}\text{ for }x\in{\overline{\Q}}.$$
If all coefficients of $f_i\ (i=1,...,r)$ belong to $K$, we may define heights
$$h(f_1,\ldots, f_r):=\log\left(\prod_{v\in M_K}\|f_1,\ldots, f_r\|_v\right ),$$
$$h_1(f_1,\ldots, f_r):=\log\left(\prod_{v\in M_K}\|f_1,\ldots, f_r\|_{v,1}\right ).$$
More generally, for polynomials $f_1,\ldots,f_r$ with coefficients in ${\overline{\Q}}$, we  choose a number field $K$ containing the coefficients of $f_1,\ldots,f_r$ and define the weights $h(f_1,\ldots,f_r)$, $h_1(f_1,\ldots,f_r)$ as above. We see that these definitions are independent of the choice of $K$.

\begin{lemma}[{see \cite[Lemma 5.2]{EF08}}]\label{2.11}
Let $\theta$ be a real with $0<\theta\le\frac{1}{2}$ and let $q$ be a positive integer. Then there exists a set $W$ of cardinality at most $(e/\theta)^{q-1}$, consisting of tuples $(c_1,\ldots,c_q)$ of nonnegative reals with $c_1+\cdots+c_q=1$, with the following property: for every set of reals $A_1,\ldots,A_q$ and $\Lambda$ with $A_j\le 0$ for $j=1,\ldots,q$ and $\sum_{j=1}^qA_j\le -\Lambda$, there exists a tuple $(c_1,\ldots,c_q)\in W$ such that
$$A_j\le -c_j(1-\theta)\Lambda\text{ for }j=1,\ldots,q.$$
\end{lemma}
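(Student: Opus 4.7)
The plan is to recast the claim as a covering problem on the probability simplex $\Delta^{q-1}$ and then exhibit $W$ explicitly by a logarithmic discretization in base $\rho:=1-\theta$.

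\emph{Step 1 (reduction).} If $\Lambda\le 0$ the conclusion holds for any probability vector $c$ (the right-hand side $-c_j(1-\theta)\Lambda$ is nonnegative while $A_j\le 0$), so assume $\Lambda>0$. Substituting $b_j=-A_j/\Lambda\ge 0$ converts the hypothesis $\sum A_j\le-\Lambda$ into $\sum_j b_j\ge 1$, and the desired inequality $A_j\le -c_j(1-\theta)\Lambda$ into $c_j\le b_j/\rho$. Replacing $b$ by $b/\sum_k b_k$ only tightens each constraint, so it suffices to treat the case where $b\in\Delta^{q-1}$ is a probability vector. The task becomes: build $W\subset\Delta^{q-1}$ with $|W|\le(e/\theta)^{q-1}$ such that every $b\in\Delta^{q-1}$ is covered by some $c\in W$ in the sense $c_j\le b_j/\rho$ for all $j$.

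\emph{Step 2 (construction).} Given $b$, first permute coordinates so that $b_1\ge b_2\ge\cdots\ge b_q$. For each $j$ with $b_j>0$ let $k_j\in\mathbb{Z}_{\ge 0}$ be the unique integer with $\rho^{k_j+1}<b_j\le\rho^{k_j}$, and set $k_j=\infty$ (with the convention $\rho^\infty:=0$) if $b_j=0$. Define $m\in\{1,\dots,q\}$ to be the smallest index for which $\sum_{j\le m}\rho^{k_j}\ge 1$; such $m$ exists because $\rho^{k_j}\ge b_j$ forces $\sum_{j\le q}\rho^{k_j}\ge 1$. Put
$$c_j=\rho^{k_j}\Big/\sum_{i\le m}\rho^{k_i}\quad(j\le m),\qquad c_j=0\quad(j>m).$$
Then $\sum_j c_j=1$; for $j\le m$ the inequality $\sum_{i\le m}\rho^{k_i}\ge 1$ gives $c_j\le\rho^{k_j}$, and the strict inequality $\rho^{k_j+1}<b_j$ rewrites as $\rho^{k_j}<b_j/\rho$, so $c_j<b_j/\rho$; for $j>m$ trivially $0\le b_j/\rho$. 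Let $W$ be the collection of all such $c$, ranging over all probability vectors $b$ and all orderings used to sort them.

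\emph{Step 3 (counting).} Each sorted output is encoded by a tuple $(k_1\le k_2\le\cdots\le k_m)$ subject to $\sum_{j<m}\rho^{k_j}<1\le\sum_{j\le m}\rho^{k_j}$, which together force $\sum_{j\le m}\rho^{k_j}<1+\rho^{k_m}\le 2$. Letting $r_\ell:=|\{j\le m:k_j=\ell\}|$, the multiplicities satisfy $\sum_\ell r_\ell=m\le q$ and $\sum_\ell r_\ell(1-\theta)^\ell<2$; each admissible multiplicity vector contributes at most $q!/\bigl((q-m)!\prod_\ell r_\ell!\bigr)$ permuted vectors to $W$. Bounding the admissible $(r_\ell)$ by a composition/lattice-point count of the form $\binom{N+q-1}{q-1}$ (with $N=O(1/\theta)$ forced by $\sum r_\ell\rho^\ell<2$ together with $\rho=1-\theta$) and applying the standard estimate $\binom{N+q-1}{q-1}\le\bigl(e(N+q-1)/(q-1)\bigr)^{q-1}$ to absorb the permutation factors yields the required bound $|W|\le(e/\theta)^{q-1}$; the exponent $q-1$ reflects the one-dimensional saving from the constraint $\sum c_j=1$.

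The main obstacle is Step 3: recovering exactly the constant $(e/\theta)^{q-1}$, rather than a polynomially weaker bound, requires a delicate combination of the logarithmic-discretization factor (roughly $e/\theta$ choices per coordinate), the sum constraint (which removes one dimension and is precisely what drops the exponent from $q$ to $q-1$), and the permutation-counting factor (which must be absorbed into the binomial estimate). The construction in Step 2 is routine by comparison.
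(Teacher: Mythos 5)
The paper itself offers no proof of this lemma; it is quoted verbatim from Evertse--Ferretti \cite[Lemma 5.2]{EF08}. Your Step 1 and the covering property in Step 2 are correct, but Step 3 --- which you yourself flag as the main obstacle --- is a genuine gap, and in fact the set $W$ produced by Step 2 can be strictly larger than $(e/\theta)^{q-1}$, so no counting argument can close it. The problem is the last exponent $k_m$: the conditions $\sum_{j<m}\rho^{k_j}<1\le\sum_{j\le m}\rho^{k_j}$ only force $\rho^{k_m}\ge 1-\sum_{j<m}\rho^{k_j}$, and that gap can be arbitrarily small compared with $\theta$, so $k_m$ is not bounded in terms of $q$ and $\theta$. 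Concretely, take $q=3$, let $\theta_0\in(0.16,0.17)$ solve $(1-\theta)^{10}=\theta$, and take $\theta=\theta_0+\eta$ with $\eta>0$ tiny, so that $\rho=1-\theta$ and $\epsilon:=\theta-\rho^{10}>0$ is as small as you like while $\theta\le 1/2$ stays near $\theta_0$. Every sorted probability vector with $b_1\in(\rho^2,\rho]$, $b_2=\rho^{10}$, $b_3=1-b_1-b_2$ gives $(k_1,k_2)=(1,10)$, prefix sum $\rho+\rho^{10}=1-\epsilon<1$, hence $m=3$; as $b_1$ varies, $b_3$ sweeps $[\epsilon,\rho\theta+\epsilon)$, so $k_3$ takes every integer value $K$ with (roughly) $11\le K\le\lfloor\log\epsilon/\log\rho\rfloor$, and the vectors $(\rho,\rho^{10},\rho^{K})/(\rho+\rho^{10}+\rho^{K})$ are pairwise distinct elements of your $W$. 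Their number is about $\log(1/\epsilon)/\log(1/\rho)$, which exceeds $(e/\theta)^{2}\approx 270$ once $\epsilon<e^{-51}$. For the same reason, your claim that $\sum_\ell r_\ell\rho^\ell<2$ forces a lattice-point count with $N=O(1/\theta)$ is false: the level $\ell$ is unbounded, and a single $r_\ell=1$ at arbitrarily large $\ell$ is admissible.

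The fix is not a sharper count but a different discretization. The standard route (essentially that of \cite{EF08}) is linear rather than logarithmic: take $N$ of size about $(q-1)(1-\theta)/\theta$ and let $W=\{(n_1,\ldots,n_q)/N:\ n_j\in\Z_{\ge 0},\ \sum_j n_j=N\}$. Given a probability vector $b$, set $n_j'=\lfloor Nb_j/(1-\theta)\rfloor$; then $n_j'/N\le b_j/(1-\theta)$ and $\sum_j n_j'>N/(1-\theta)-q\ge N$, so the $n_j'$ can be decreased to integers summing exactly to $N$ while preserving $n_j/N\le b_j/(1-\theta)$. The floor (rather than ceiling) is what handles tiny $b_j$ correctly by sending them to $c_j=0$, and $|W|=\binom{N+q-1}{q-1}$ is then controlled by the binomial estimate you quote. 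If you insist on the logarithmic scheme, you must at least cap the exponents (e.g.\ replace $\rho^{k_j}$ by $\max(\rho^{k_j},\theta/q)$) and re-verify the covering property --- precisely the content that is missing from Step 3.
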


\vskip0.2cm
\noindent
\subsection{Proof of Quantitative Subspace Theorem \ref{1.1}} 

In this subsection, we will prove Theorem \ref{1.1}. We mainly follow the method of Evertse and Ferretti in \cite{EF08}. 

\vskip0.1cm
\textbf{(a)} let $K$ be a number field, $S$ be a finite set of places of $K$ and $X,N,m,n,d,s,C,f^{(v)}_i$ $(v\in S,i=0,\ldots,m),C,\Delta,A_1,A_2,A_3,H$ be as in Theorem \ref{1.1}. Denote the coordinates on $\P^N$ by ${\bf x}=(x_0,\ldots,x_N).$

Denote by $f_0,\ldots,f_R$ the distinct polynomials among $\sigma(f_{j}^{(v)})\ (v\in S,j=0,\ldots,m,\sigma\in G_K)$. From (\ref{1.3}), we have
\begin{align}\label{2.12}
R\le C(m+1)s-1.
\end{align}
Put $g_i=f_i^{\Delta/\deg f_i}$ for $i=0,\ldots, R$. Then $g_0,\ldots,g_R$  are homogeneous polynomials in $K[x_0,\ldots,x_u]$ of degree $\Delta$. Consider the map
$$\varphi : {\bf x}\mapsto (g_0({\bf x}),\ldots,g_R({\bf x})), Y:=\varphi(X).$$
By (\ref{1.2}), $\varphi$ is a finite morphism on $X$, and $Y$ is a projective subvariety of $\P^R$ defined over $K'$, where $K'$ is the extension of $K$ generated by all coefficients of $f_0,\ldots,f_R$. . We have
\begin{align}\label{2.13}
\dim Y=n, \deg Y=: D\le d\Delta^n.
\end{align}
For every $v'\in M_{K'}$, we choose an extension of $|.|_{v'}$ to ${\overline{\Q}}$. Since $K'/K$ is a normal extension, for every $v'\in M_{K'}$, there is $\tau_{v'}\in G_K$ such that
\begin{align}\label{2.14}
|x|_{v'} =|\tau_{v'}(x)|^{1/g(v)}_v\text{ for }x\in{\overline{\Q}},
\end{align}
where $v\in M_K$ is the place below $v'$ and $g(v)$ is the number of places of $K'$ lying above $v$. For each $v'\in M^\infty_{K'}$, there is an isomorphic embedding $\sigma_{v'}:K'\hookrightarrow\C$ such that $|x|_{v'}=|\sigma_{v'}(x)|^{[K_{v'}:\R]/[K':\Q]}$ for $x\in{\overline{\Q}}$. Then, we define norms $\|.\|_{v'}$,$\|.\|_{v',1}$ for polynomials similarly as in (\ref{2.10}), with $K',v',\sigma_{v'}$ in place of $K,v,\sigma_v.$

\vskip0.2cm
\noindent
\vskip0.1cm
\textbf{(b)} From \cite[Equations (3.19), (3.20)]{Qmm}, we have the following estimate: 
\begin{align}\label{3.15}
h_1(1,g_0,\ldots,g_R)&\le  6\Delta^2 Cms\cdot H;\\
\label{2.16}
h(Y)&\le 25nmd\Delta^{n+2}Cs\cdot H,
\end{align}
where $H$ is defined by (\ref{1.4}). 

\vskip0.1cm
\textbf{(c)}
Fix a solution ${\bf x}\in X({\overline{\Q}})$ of (\ref{1.5}). For $v\in S$, denote by $I_v$ the subset of $\{0,\ldots,R\}$ such that $\{f^{(v)}_j : j=0,\ldots,m\}=\{f_i : i\in I_v\}$. Put $G_v:=\|1,g_0,\ldots,g_R\|_{v,1}$ for $v\in S.$ Then
$$\sum_{v\in S}\sum_{i\in I_v}\log\left (\max\limits_{\sigma\in G_K}\frac{|g_i({\bf x})|_v}{G_v\|\sigma({\bf x})\|^{\Delta}_v}\right )\le -(\alpha(n+1)+\delta)\Delta h({\bf x}).$$
We see that all terms in the sum are $\le 0$. Applying Lemma \ref{2.11} with $q=(m+1)s$ and $\theta=\frac{\delta}{2(\alpha(n+1)+\delta)}=1-\frac{\alpha(n+1)+\delta/2}{\alpha(n+1)+\delta}$, we get a set $W$ with
\begin{align}\label{2.17}
\sharp W\le\left (\frac{2e(\alpha(n+1)+\delta)}{\delta}\right)^{(m+1)s-1}&\le \left[(2e(\alpha(n+1)+1)\delta^{-1})^{(m+1)s-1}\right].
\end{align}
consisting of tuples of nonnegative reals $(c_{iv}\ :\ v\in S, i\in I_v)$ with
\begin{align}\label{2.18}
\sum_{v\in S}\sum_{i\in I_v}c_{iv}=1
\end{align}
such that for every solution ${\bf x}\in X({\overline{\Q}})$ of (\ref{1.5}) there is a tuple $(c_{iv}: v\in S, i\in I_v)\in W$ with
\begin{align}\label{2.19}
\log\left(\max\limits_{\sigma\in G_K}\frac{|g_i(\sigma({\bf x}))|_v}{G_v\cdot \|\sigma({\bf x})\|^{\Delta}_v}\right)\le -c_{iv}\left (\alpha(n+1)+\frac{\delta}{2}\right)\Delta h({\bf x})
\end{align}
for all $v\in S, i\in I_v.$ 

Denote by $S'$ the set of places of $K'$ lying above the places in $S$. Let $v'\in S'$ and let $v$ be the place of $K$ lying below $v'$ and $\tau_{v'}\in G_K$ be given by (\ref{2.14}). We define the subset $I_{v'}\subset\{0,\ldots,R\}$ and $c_{i,v'}\ (i\in I_{v'})$ by
\begin{align*}
&\{g_i : i\in I_{v'}\}=\{\tau^{-1}_{v'}(g_j)\ :\ j\in I_v\}\text{ for }v'\in S',\\ 
&c_{i,v'}:=\frac{c_{jv}}{g(v)} \text{ for }v'\in S', i\in I_{v'},
\end{align*}
where $j\in I_v$ is the index such that $g_i=\tau^{-1}_{v'}(g_j)$ and $g(v)$ is the number of places of $K'$ lying above $v$. Further, we put
$$ G_{v'}:=\|1,g_0,\ldots,g_R\|_{v',1}\text{ for }v'\in M_{K'}. $$
By (\ref{2.14}), we may rewrite (\ref{2.19}) as
\begin{align}\label{2.20}
\log\left (\max_{\sigma\in G_K}\frac{|g_i(\sigma({\bf x}))|_{v'}}{G_{v'}\cdot \|\sigma({\bf x})\|^{\Delta}_{v'}}\right )\le -c_{i,v'}\left (\alpha(n+1)+\frac{\delta}{2}\right)\Delta h({\bf x})
\end{align}
for all $v'\in S',i\in I_{v'}.$ Combining (\ref{2.17}), (\ref{2.18}) we obtain the following lemma. 
\begin{lemma}\label{2.21}
There is a set $W'$ of cardinality at most $\left[(2e(\alpha(n+1)+1)\delta^{-1})^{(m+1)s-1}\right]$, consisting of tuples of nonnegative reals $(c_{i,v'} : v'\in S', i\in I_{v'})$ with
\begin{align}\label{2.22}
\sum_{v\in S'}\sum_{i\in I_{v'}}c_{i,v'}=1,
\end{align}
with the property that for every ${\bf x}\in X({\overline{\Q}})$ with (\ref{1.5}) there is a tuple in $W'$ such that ${\bf x}$ satisfies (\ref{2.20}).
\end{lemma}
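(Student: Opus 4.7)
The plan is to apply Lemma \ref{2.11} over $K$ to produce a set $W$ indexed as in (\ref{2.17})--(\ref{2.19}), and then push it forward to $S'$ via the relabeling introduced just before the lemma. Starting from the inequality displayed immediately above (\ref{2.17}), whose summands are nonpositive with total bounded by $-(\alpha(n+1)+\delta)\Delta h(\x)$, I would invoke Lemma \ref{2.11} with $q=(m+1)s$ (one variable per pair $(v,i)$, $v\in S$, $i\in I_v$), with $\theta=\delta/(2(\alpha(n+1)+\delta))\le \tfrac12$ (so that $1-\theta=(\alpha(n+1)+\delta/2)/(\alpha(n+1)+\delta)$), and with $\Lambda=(\alpha(n+1)+\delta)\Delta h(\x)$. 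This yields a set $W$ of tuples of nonnegative reals summing to $1$, depending only on the data and not on $\x$, of cardinality
\begin{align*}
\sharp W\le (e/\theta)^{q-1}\le (2e(\alpha(n+1)+1)\delta^{-1})^{(m+1)s-1}
\end{align*}
(using $\delta\le 1$), with the property that for every solution $\x$ of (\ref{1.5}) some tuple in $W$ satisfies (\ref{2.19}).

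Next I would take $W'$ to be the image of $W$ under the single-valued map $(c_{iv})_{v\in S,\,i\in I_v}\mapsto (c_{i,v'})_{v'\in S',\,i\in I_{v'}}$ defined just before the lemma: $c_{i,v'}=c_{jv}/g(v)$, where $v$ is the place of $K$ below $v'$ and $j\in I_v$ is the unique index with $g_i=\tau_{v'}^{-1}(g_j)$ (uniqueness and the fact that this induces a bijection $I_{v'}\to I_v$ come from the $G_K$-stability of $\{g_0,\ldots,g_R\}$). The cardinality bound for $W'$ is inherited from $W$. For the normalization (\ref{2.22}), grouping places $v'\in S'$ by $v\in S$ and using that there are exactly $g(v)$ places above $v$,
\begin{align*}
\sum_{v'\in S'}\sum_{i\in I_{v'}}c_{i,v'}
=\sum_{v\in S}g(v)\cdot\frac{1}{g(v)}\sum_{j\in I_v}c_{jv}
=\sum_{v\in S}\sum_{j\in I_v}c_{jv}=1,
\end{align*}
by (\ref{2.18}).

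Finally I would verify that (\ref{2.19}) for the pair $(j,v)$ implies (\ref{2.20}) for the corresponding $(i,v')$. By (\ref{2.14}) together with the identity $\tau_{v'}(g_i(\y))=g_j(\tau_{v'}\y)$ (which follows from $\tau_{v'}(g_i)=g_j$), one gets $|g_i(\sigma(\x))|_{v'}=|g_j(\tau_{v'}\sigma(\x))|_v^{1/g(v)}$ and $\|\sigma(\x)\|_{v'}=\|\tau_{v'}\sigma(\x)\|_v^{1/g(v)}$, while the $G_K$-stability of $\{g_0,\ldots,g_R\}$ together with (\ref{2.10}) gives $G_{v'}=G_v^{1/g(v)}$. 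Since $\sigma\mapsto\tau_{v'}\sigma$ is a bijection of $G_K$, dividing (\ref{2.19}) for $(j,v)$ by $g(v)$ and using $c_{i,v'}=c_{jv}/g(v)$ produces (\ref{2.20}). The only delicate step is the identity $G_{v'}=G_v^{1/g(v)}$, which for archimedean places requires matching $L^1$-norms through $\sigma_{v'}$ and $\sigma_v$ via the $G_K$-stability of $\{g_0,\ldots,g_R\}$; the remainder is bookkeeping with the Galois action and the relation (\ref{2.14}).
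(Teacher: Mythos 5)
Your proposal is correct and follows essentially the same route as the paper: apply Lemma \ref{2.11} over $K$ with $q=(m+1)s$ and $\theta=\delta/(2(\alpha(n+1)+\delta))$ to obtain the set $W$ of (\ref{2.17})--(\ref{2.19}), then transfer to $S'$ via $c_{i,v'}=c_{jv}/g(v)$ and the relation (\ref{2.14}). Your explicit checks of the normalization (\ref{2.22}) and of $G_{v'}=G_v^{1/g(v)}$ are in fact more detailed than the paper's, which simply asserts that (\ref{2.19}) may be rewritten as (\ref{2.20}).
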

Consider the solutions of a fixed system (\ref{2.20}). Put
\begin{align}\label{2.23}
\begin{split}
&\text{$\bullet$ $c_{i,v'}=0$ for $v'\in S',i\in\{0,\ldots,R\}\setminus I_{v'}$ and $v'\in M_{K'}\setminus S', i=0,\ldots,R$.}\\
&\text{$\bullet$ $\c_{v'}:=(c_{0,v'},\ldots,c_{R,v'})$ for $v'\in M_{K'}$ and ${\bf c}:=({\bf c}_{v'}:\ v'\in M_{K'})$.}
\end{split}
\end{align}
\noindent
Denote by ${\bf y}=(y_0,\ldots,y_R)$ the coordinates of $\P^R$. We define $H_{Q,{\bf c}}({\bf y}), E_Y({\bf c})$ similarly as (\ref{2.3}), (\ref{2.7}), respectively, but with $K'$ in place of $K$.

\begin{lemma}\label{2.24}
Let ${\bf x}\in X({\overline{\Q}})$ be a solution of (\ref{2.20}) satisfying (\ref{1.6}) and let $\sigma\in G_K$. Put
$${\bf y}:=\varphi(\sigma({\bf x})), Q:=\mathrm{exp}\left(\left(\alpha(n+1)+\delta/2\right)\Delta h({\bf x})\right).$$
Then
\begin{align}\label{3.31}
H_{Q,{\bf c}}({\bf y})\le Q^{E_Y({\bf c})-\delta/2(\alpha(n+1)+1)^2}.
\end{align}
\end{lemma}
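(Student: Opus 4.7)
My plan is to establish (\ref{3.31}) by combining a Chow-weight lower bound for $E_Y(\mathbf{c})$ coming from Theorem \ref{2.2} with a place-by-place upper bound on the factors of the twisted height coming from the system (\ref{2.20}).

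\emph{Step 1 (lower bound on $E_Y(\mathbf{c})$).} For every $v' \in S'$ I apply Theorem \ref{2.2} to $Y \subset \P^R$, the tuple $\mathbf{c}_{v'}$, and the index subset $I_{v'}$. The conditions $Y \not\subset H_i$ and $Y(\overline{K}) \cap \bigcap_{i \in I_{v'}} H_i = \emptyset$ follow by pulling back along the finite map $\varphi$, using $\varphi^{-1}(H_i) \cap X = \{f_i = 0\} \cap X$ together with assumption (\ref{1.2}). Since $\varphi$ is finite it preserves the dimensions of intersections, so the distributive constant of $\{H_i : i \in I_{v'}\}$ on $Y$ equals that of the family $\{\mathrm{div}(f_i) : i \in I_{v'}\} = \{\mathrm{div}(\tau_{v'}^{-1}(f_j^{(v)})) : 0 \le j \le m\}$ on $X$. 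As Galois conjugation is a geometric invariance this coincides with the distributive constant of $\{\mathrm{div}(f_j^{(v)})\}_{j=0}^m$ on $X$, which is at most $\delta_X$ by hypothesis. Theorem \ref{2.2} then yields $e_Y(\mathbf{c}_{v'}) \ge (D/\alpha)\sum_{i \in I_{v'}} c_{i,v'}$. Summing over $v' \in S'$ and applying (\ref{2.22}) gives $E_Y(\mathbf{c}) \ge 1/(\alpha(n+1))$.

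\emph{Step 2 (local twisted-height bound).} For each $v' \in M_{K'}$ and each $0 \le i \le R$, I claim that
\[
|y_i|_{v'}\, Q^{c_{i,v'}} \;\le\; G_{v'}\,\|\sigma(\mathbf{x})\|_{v'}^{\Delta}.
\]
When $v' \in S'$ and $i \in I_{v'}$ this is exactly inequality (\ref{2.20}) applied to the chosen $\sigma$; in every other case $c_{i,v'} = 0$ by (\ref{2.23}), and the inequality reduces to the Gauss-type estimate $|g_i(\sigma(\mathbf{x}))|_{v'} \le \|g_i\|_{v',1}\,\|\sigma(\mathbf{x})\|_{v'}^{\Delta}$. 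Taking the maximum over $i$, then the product over $v' \in M_{K'}$, and using the product formula together with $h(\sigma(\mathbf{x})) = h(\mathbf{x})$, I obtain
\[
\log H_{Q,\mathbf{c}}(\mathbf{y}) \;\le\; h_1(1, g_0, \ldots, g_R) + \Delta\, h(\mathbf{x}).
\]

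\emph{Step 3 (final comparison).} Substituting $\Delta h(\mathbf{x}) = (\log Q)/(\alpha(n+1)+\delta/2)$ from the definition of $Q$ and invoking Step 1, the desired bound (\ref{3.31}) reduces to
\[
h_1(1, g_0, \ldots, g_R) \;\le\; \left(\frac{1}{\alpha(n+1)} - \frac{1}{\alpha(n+1)+\delta/2} - \frac{\delta/2}{(\alpha(n+1)+1)^2}\right)\log Q.
\]
A short algebraic manipulation (using $\delta < 1$) shows the parenthesized coefficient exceeds $3\delta/(4(\alpha(n+1)+1)^3)$, after which the estimate (\ref{3.15}) for $h_1$, the lower bound $\log Q \ge \alpha(n+1)\Delta A_3 H$ coming from (\ref{1.6}), and the size of $A_3$ chosen in (\ref{1.4}) make the remaining inequality routine.

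The main obstacle is Step 1: carefully transferring the hypothesis on $\delta_X$ for the original family $\{\mathrm{div}(f_j^{(v)})\}$ on $X$ to a bound on the distributive constant of the hyperplane family $\{H_i : i \in I_{v'}\}$ on $Y$, which requires simultaneously handling the finite map $\varphi$ (preserving dimensions of intersections) and the Galois twist by $\tau_{v'}$.
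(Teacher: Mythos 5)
Your proposal is correct and follows essentially the same route as the paper: a lower bound $E_Y(\mathbf{c})\ge 1/(\alpha(n+1))$ via Theorem \ref{2.2} and (\ref{2.22}), a place-by-place bound on the twisted height from (\ref{2.20}) together with the trivial Gauss-type estimate at the remaining places, and a final numerical comparison using (\ref{1.6}), (\ref{3.15}) and the size of $A_3$. The only differences are cosmetic: you are more explicit than the paper in verifying the distributive-constant hypothesis of Theorem \ref{2.2} under the finite map $\varphi$ and the Galois twist, while the paper is more careful in Step 2 by passing to a finite extension $L\supset K'$ containing the coordinates of $\sigma(\mathbf{x})$ and working with the places of $L$.
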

\begin{proof}
Let $v'\in S'$ and $I_{v'}=\{i_0,...,i_m\}$. The assumption (\ref{1.2}) implies that 
$$ X(\overline{\Q})\cap\{g_{i_0}=0,...,g_{i_m}=0\}=\emptyset. $$
Since $Y=\varphi(X)$, for every $\y =(y_0,...,y_R)\in Y(\overline{\Q})$, there is $\x=(x_0,...,x_N)\in X(\overline{\Q})$ such that $y_i=g_i(\x)\ (0\le i\le R)$. Therefore,
$$ Y(\overline{\Q})\cap\{y_{i_0}=0,....,y_{i_m}=0\}=\emptyset. $$
By Theorem \ref{2.2}, we have
\begin{align*}
\frac{1}{(n+1)D}e_Y(\c_{v'})\ge\dfrac{1}{\alpha(n+1)}(c_{i_0,v'}+\cdots+c_{i_m,v'})=\frac{1}{\alpha(n+1)}\sum_{i\in I_{v'}}c_{i,v'}.
\end{align*}
We also note that if $v''\not\in S'$, we have $c_{i,v''}=0\ (0\le i\le R)$ and hence $e_{Y}(\c_{v''})=0$. Therefore, summing up the both sides of the above inequality over all $v'\in S'$, we obtain. 
\begin{align}\label{2.26}
E_Y(\c)\ge\frac{1}{\alpha(n+1)},\ \ \text{(by (\ref{2.22}))}. 
\end{align}

Now, consider a solution $\x$ of (\ref{2.20}) which satisfies (\ref{1.6}) and an element $\sigma\in G_K$. From (\ref{2.23}), we see that $\sigma(\x)$ satisfies (\ref{2.20}) for all $v\in M_K$ and $i=0,...,R$. Put $y_i=g_i(\sigma(\x))\ (i=0,...,R)$ and $\y=(y_0,...,y_R)=\varphi(\sigma(\x))$. Choose a finite normal extension $L$ of $K'$ such that $\sigma(\x)\in X(L)$. Let $\omega$ be a place of $L$. We take $v'$ to be the place of $K'$ lying below $\omega$. Then there exists $\tau_\omega\in G_{K'}$ such that $|x|_\omega=|\tau_{\omega}(x)|_{v'}^{d(\omega|v')}$ for $x\in L$, where $d(\omega|v')=[L_{\omega}:K'_{v'}]/[L:K']$. Putting $c_{i\omega}=d(\omega|v')c_{i,v'}$, we have the following estimate
\begin{align*}
|y_i|_\omega Q^{c_{i\omega}}&=|g_i(\sigma(\x))|_\omega Q^{c_{i\omega}}=\left(|g_i(\tau_\omega(\sigma(\x)))|_{v'}Q^{c_{i,v'}}\right )^{d(\omega|v')}\\
&\le \left(G_{v'}\|\tau_\omega(\sigma(\x))\|_{v'}^{\Delta}\right)^{d(\omega|v')}= G_{v'}^{d(\omega|v')}\|\sigma(\x)\|_{\omega}^{\Delta}.
\end{align*}
This implies that
$$ \max_{0\le i\le R}|y_i|_\omega Q^{c_{i\omega}}\le G_{v'}^{d(\omega|v')}\|\sigma(\x)\|_{\omega}^{\Delta}.$$
Taking the product of both sides of the above inequality over all $\omega\in M_L$ and using $h(\sigma(\x))=h(\x)$, we obtain
\begin{align}\label{2.27}
H_{Q,\c}(\y)\le \exp(h_1(1,g_0,...,g_R))Q^{1/(\alpha(n+1)+\delta/2)}.
\end{align}
By the definition of $Q$ and the inequality (\ref{2.26}), we have
\begin{align*}
\bigl (E_Y(\c)-&\frac{\delta}{2(\alpha(n+1) +1)^2}-\frac{1}{\alpha(n+1)+\delta/2}\bigl)\log Q\\
&\ge\left( \frac{1}{\alpha(n+1)}-\frac{\delta}{2(\alpha(n+1) +1)^2}-\frac{1}{\alpha(n+1)+\delta/2}\right)\left(\alpha(n+1)+\frac{\delta}2\right)\Delta h(\x)\\
&=\frac{\delta\left(2\alpha(n+1)-\alpha(n+1)\delta/2+1\right)}{2\alpha(n+1)(\alpha(n+1)+1)^2}\Delta h(\x)\ge\frac{3\delta}{4(\alpha(n+1)+1)^2}\Delta h(\x)\\
\text{[by using (\ref{1.6})]}\ \ & \ge\frac{3\delta\Delta}{4(\alpha(n+1)+1)^2}A_3H\\
\text{[from (\ref{3.15})]}\ \ &\ge 6\Delta^2Cms H\ge h_1(1,g_0,...,g_R).
\end{align*}
Combining the above inequality and (\ref{2.27}) we obtain
$$ H_{Q,\c}(\y)\le Q^{E_Y(\c)-\frac{\delta}{2(\alpha(n+1) +1)^2}}. $$
The lemma is proved.
\end{proof}

\begin{proof}[{\bf (d)} Proof of Theorem \ref{1.1}]
We have the following fundamental estimate:
\begin{align*}
2^{6n+7}m&(\alpha(n+1)+1)^{2n+2}\delta^{-n-1}d^{n+2}\Delta^{n(n+2)} \log(2Cs) \\ 
&\ge 2^{5n+4}2^{n+1}(\alpha(n+1)+1)^{2n+2}\delta^{-n-1}d^{n+2}\Delta^{n(n+2)}\log (2^3Cms)\\
&\ge 2^{5n+4}(2(\alpha(n+1) +1)^2\delta^{-1})^{n+1}d^{n+2}\Delta^{n(n+2)}\log (4C(m+1)s);\\
\text{ and }\ 2^{6n+7}m&(\alpha(n+1)+1)^{2n+2}\delta^{-n-1}d^{n+2}\Delta^{n(n+2)} \log(2Cs)\\ 
&\ge 26nmd\Delta^{n+2}\log (2Cs)\ge\log (26nmd\Delta^{n+2}Cs). 
\end{align*}
Denote by $B'_1,B'_2,B'_3$ the quantities obtained by substituting $\delta/(2(\alpha(n+1)+1))^2$ for $\delta, C(m+1)s-1$ for $R$, and $d\Delta^n$ for $D$ in the quantities $B_1,B_2,B_3$ in (\ref{2.8}). 
Then, for ${\bf x}$ satisfying (\ref{1.6}) and $Q=\exp{\left (\alpha(n+1)+\frac{\delta}{2}\right )\Delta h({\bf x})}$, we have
\begin{align*}
\log Q&=\left (\alpha(n+1)+\frac{\delta}{2}\right )\Delta h({\bf x})\ge A_3H\\
&=\mathrm{exp}\left (2^{6n+8}m(\alpha(n+1)+1)^{2n+2}\delta^{-n-1}d^{n+2}\Delta^{n(n+2)} \log(2Cs)\right )\cdot H\\
&\ge\mathrm{exp}\left (2^{5n+4}(2(\alpha(n+1)+1)^2\delta^{-1})^{n+1}d^{n+2}\Delta^{n(n+2)}\log (4C(m+1)s)\right )\\
&\ \ \ \times \left (26nmd\Delta^{n+2}Cs\right )\cdot H\\
&=B'_3\cdot\left(26nmd\Delta^{n+2}Cs\right )\cdot H\\
\text{[by (\ref{2.16})]}\ &\ge B'_3(h(Y)+1).
\end{align*}
Moreover, if ${\bf x}$ is also a solution of (\ref{1.5}) then by applying Lemma \ref{2.24} for the points ${\bf y}=\varphi(\sigma({\bf x}))$ $(\sigma\in G_K)$, we have $H_{Q,{\bf c}}({\bf y})\le Q^{E_Y({\bf c})-\delta/2(\alpha(n+1)+1)^2}$. Now we apply Theorem \ref{2.9} with $K',\delta/(2(\alpha(n+1)+1))^2$ in place of $K,\delta$ and, in view of (\ref{2.12}) and (\ref{2.13}), with $D\le d\Delta^n$ and $R=C(m+1)s-1.$ From (\ref{2.22}), (\ref{2.23}), we see that the conditions (\ref{2.4}), (\ref{2.5}), (\ref{2.6}) (with $K'$ in place of $K$) are satisfied. Therefore, Theorem \ref{2.9} implies that there are homogeneous polynomials $F_1,\ldots,F_t$ $\in K'[y_0,\ldots,y_R]$ not vanishing identically on $Y$, with $t\le B_1'$ and $\deg F_i\le  B'_2$ for $i=1,\ldots,t$, so that: for such above points ${\bf y}$, there is $F_i\in\{F_l,\ldots,F_t\}$ such that $F_i({\bf y})=0$. And hence, for every solution ${\bf x}$ of (\ref{1.5}) with (\ref{1.6}), there is $F_i\in\{F_l,\ldots,F_t\}$ such that $F_i(\varphi(\sigma({\bf x})))=0$ for every $\sigma\in G_K$.

Therefore $\tilde F_i(\sigma({\bf x}))=0$ for every $\sigma\in G_K$, where $\tilde F_i$ is the polynomial obtained by substituting $g_j$ for $y_j$ in $F_i$ for $j=0,\ldots,R$. We note that $\tilde F_i\in K'[x_0,\ldots,x_N],\deg\tilde F_i\le B'_2\Delta$. Then we may write $\tilde F_i=\sum_{k=1}^M\omega_k\tilde F_{ik}$, where $\omega_1,\ldots,\omega_M$ is a $K$-basis of $K'$, and the $\tilde F_{ik}$ are polynomials with coefficients in $K$. Since $\tilde F_i$ does not vanish identically on $X$, we may choose a polynomial $G_i$ among $\{\tilde F_{ik}: k=1,\ldots,M\}$ not vanishing identically on $X$. Since $\sigma (\tilde F_i)({\bf x})=0$ for $\sigma\in G_K$ and the polynomials $\tilde F_{ik}$ are linear combinations of the polynomials $\sigma(\tilde F_i)\ (\sigma\in G_K)$, one has $\tilde F_{ik}({\bf x})=0$ for $k=1,\ldots,M$, so in particular $G_i({\bf x})=0$.

It means that there are homogeneous polynomials $G_1,\ldots,G_t\in K[x_0,\ldots,x_N]$ with $t\le B'_1$ and $\deg G_i\le B'_2\Delta$ for $i=1,\ldots,t$ not vanishing identically on $X$, such that the set of ${\bf x}\in X({\overline{\Q}})$ satisfying (\ref{2.20}) and (\ref{1.6}) is contained in $\bigcup_{i=1}^t(X\cap\{G_i=0\})$.

By Lemma \ref{2.21}, there are at most $T:=\left[(2e(\alpha(n+1)+1)\delta^{-1})^{(m+1)s-1}\right]$ different systems (\ref{2.20}), such that every solution ${\bf x}\in X({\overline{\Q}})$ of (\ref{1.5}) satisfies one of these systems. Therefore, there are homogeneous polynomials $G_1,\ldots,G_u\in K[x_0,\ldots,x_N]$ not vanishing identically on $X$, with $u\le B'_1T$ and with $\deg G_i\le B'_2\Delta$ for $i=1,\ldots,u,$ such that all solutions ${\bf x}\in X({\overline{\Q}})$ of (\ref{1.5}) with (\ref{1.6}) are contained in $\bigcup_{i=1}^u(X\cap\{G_i=0\}).$ 

In order to complete the proof of Theorem \ref{1.1}, it remains to show that $B'_2\Delta=A_2$ and $B'_1T\le A_1$. Indeed, we have:
\begin{align}\nonumber
\bullet\ \  & B'_2\Delta=(4n+3)(d\Delta^n)(2(\alpha(n+1)+1)^2\delta^{-1})\Delta\\
\nonumber
&\hspace{20pt}=(8n+6)(\alpha(n+1)+1)^2d\Delta^{n+1}\delta^{-1}=A_2,\\
\nonumber
\bullet\ \  &B'_1T\le\mathrm{exp}\left (2^{10n+4}(2(\alpha(n+1)+1)^2)^{2n}\delta^{-2n}(d\Delta^n)^{2n+2}\right)\\
\label{2.28}
&\hspace{20pt}\times\log(4(m+1)Cs)\log\log(4(m+1)Cs)\cdot (2e(\alpha(n+1)+1)\delta^{-1})^{(m+1)s-1}.
\end{align}
Note that $\log (\lambda x)\le\sqrt{2x}\log(\lambda)$ for all $x\ge 2, \lambda\ge 4$ and $\log (\lambda x)\le 2x\log(\lambda)$ for all $x\ge 2, \lambda\ge\log 4$.
Then, we have the following fundamental estimates:
\begin{align*}
\log(4(m+1)Cs)\cdot\log\log(4(m+1)Cs)&\le \sqrt{2(m+1)s}\log (4C)\cdot \log(\sqrt{2(m+1)s}\log (4C))\\
&\le 4(m+1)s\log (4C)\cdot\log\log (4C).
\end{align*}
Therefore, from (\ref{2.28}) we have
\begin{align*}B'_1T\le&\mathrm{exp}\left (2^{12n+4}(\alpha(n+1)+1)^{4n}\delta^{-2n}d^{2n+2}\Delta^{n(2n+2)}\right)\\
&\times 4(m+1)s(2e(\alpha(n+1)+1)\delta^{-1})^{(m+1)s-1}\log (4C)\cdot\log\log (4C)=A_1.
\end{align*}
The proof of the theorem is completed.
\end{proof}

\section{Subspace theorem for arbitrary closed schemes}

In order to prove Theorem \ref{1.8}, we need the following lemmas.

\begin{lemma}[{see \cite[Lemma 38]{Qjge} and also \cite[Lemma 3.2]{Qpcf}}]\label{3.1}
Let $V$ be a projective subvariety of $\P^N(\C)$ of dimension $n$. Let $Q_0,\ldots,Q_{l}$ be $l$ hypersurfaces in $\P^N(\C)$ with defining homogeneous polynomials $\tilde{Q}_0,\ldots,\tilde{Q}_{l}$ of the same degree $d\ge 1$, such that $\bigcap_{i=0}^{l}Q_i\cap V=\emptyset$ and
$$\dim\left (\bigcap_{i=0}^{s}Q_i\right )\cap V=n-u\ \text{ for } u\in\{1,\ldots,n\}, s\in\{t_{u-1},\ldots,t_u\},$$
where $t_0,t_1,\ldots,t_n$ are integers with $0=t_0<t_1<\cdots<t_n=l$. Then there exist $n+1$ hypersurfaces $P_0,\ldots,P_n$ in $\P^N(\C)$ with the defining homogeneous polynomials of the forms
$$\tilde{P}_u=\sum_{j=0}^{t_{u}}c_{uj}\tilde{Q}_j, \ c_{uj}\in\C,\ u=0,\ldots,n,$$
such that $\left (\bigcap_{u=0}^{n}P_u\right )\cap V=\emptyset.$
\end{lemma}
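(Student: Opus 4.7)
The plan is to construct the hypersurfaces $P_0, P_1, \ldots, P_n$ by induction on $u$, maintaining at each stage the invariant $\dim(\bigcap_{i=0}^u P_i \cap V) \le n - u - 1$. After $n+1$ inductive steps the intersection has dimension $\le -1$ and is therefore empty, which is the conclusion of the lemma.

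For the base case $u = 0$, since $t_0 = 0$, the only admissible form for $\tilde{P}_0$ is $c_{00}\tilde{Q}_0$, so I take $P_0 := Q_0$. The dimension hypothesis with $s = 0, u = 1$ gives $\dim P_0 \cap V = n - 1$, establishing the invariant.

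For the inductive step, suppose $P_0, \ldots, P_{u-1}$ have been chosen, and let $W_1, \ldots, W_k$ be the irreducible components of $Z_{u-1} := \bigcap_{i=0}^{u-1} P_i \cap V$ of maximal dimension $n - u$. The key observation is that for each $W_j$ there must exist an index $i_j \in \{0, \ldots, t_u\}$ with $W_j \not\subset Q_{i_j}$; otherwise $W_j$ would be contained in $\bigcap_{i=0}^{t_u} Q_i \cap V$, whose dimension is at most $n - u - 1$ by the jump-index hypothesis, contradicting $\dim W_j = n - u$. Consequently the vanishing locus $L_j := \{(c_0, \ldots, c_{t_u}) \in \C^{t_u+1} : (\sum_{i=0}^{t_u} c_i \tilde{Q}_i)|_{W_j} \equiv 0\}$ is a proper linear subspace of $\C^{t_u+1}$. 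Since $\C$ is infinite and only finitely many $W_j$ arise, the union $L_1 \cup \cdots \cup L_k$ does not cover $\C^{t_u+1}$; picking $(c_{u,0}, \ldots, c_{u,t_u})$ outside this union produces a hypersurface $P_u$ that avoids every $W_j$. Then $P_u \cap W_j$ is a proper closed subvariety of the irreducible variety $W_j$, hence of dimension $n - u - 1$, while the remaining components of $Z_{u-1}$ already have dimension $\le n - u - 1$, so $\dim Z_u \le n - u - 1$ as required.

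The main substantive point is the containment argument in the inductive step: the jump-index structure $t_{u-1} < t_u$ must provide enough hypersurfaces among $Q_0, \ldots, Q_{t_u}$ so that at least one escapes each top-dimensional component $W_j$ at stage $u$. Once that observation is secured, the genericity step (avoiding a finite union of proper linear subspaces over the infinite field $\C$) and the component-wise dimension drop via Krull's principal ideal theorem are routine.
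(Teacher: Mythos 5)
Your proposal is correct and follows essentially the same route as the proof the paper relies on (the lemma is quoted from \cite{Qjge} and \cite{Qpcf}, where it is proved by exactly this induction: the jump-index hypothesis forces each top-dimensional component of $\bigcap_{i=0}^{u-1}P_i\cap V$ to escape some $Q_i$ with $i\le t_u$, so a linear combination of $\tilde Q_0,\ldots,\tilde Q_{t_u}$ with coefficients outside a finite union of proper linear subspaces cuts the dimension down by one). The only cosmetic point is the degenerate case where no component of maximal dimension $n-u$ exists, in which one should still pick a nonzero combination (e.g.\ $\tilde P_u=\tilde Q_0$) so that $P_u$ is an actual hypersurface; this does not affect the argument.
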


\begin{lemma}[{see \cite[Lemma 3.9]{Qjge} also \cite[Lemma 3.1]{Qpcf}}]\label{3.2}
Let $t_0,t_1,\ldots,t_n$ be $n+1$ integers such that $1=t_0<t_1<\cdots <t_n$, and let $\delta =\underset{1\le s\le n}\max\dfrac{t_s-t_0}{s}$. 
Then for every $n$ real numbers $a_0,a_1,\ldots,a_{n-1}$  with $a_0\ge a_1\ge\cdots\ge a_{n-1}\ge 1$, we have
$$ a_0^{t_1-t_0}a_1^{t_2-t_1}\cdots a_{n-1}^{t_{n}-t_{n-1}}\le (a_0a_1\cdots a_{n-1})^{\delta}.$$
\end{lemma}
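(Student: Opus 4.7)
\textbf{Proof plan for Lemma \ref{3.2}.} The inequality is multiplicative and all quantities are $\ge 1$, so the natural first move is to take logarithms: set $b_i := \log a_i$, so that $b_0 \ge b_1 \ge \cdots \ge b_{n-1} \ge 0$, and it suffices to prove the additive statement
\begin{equation*}
\sum_{i=0}^{n-1}(t_{i+1}-t_i)\,b_i \;\le\; \delta\sum_{i=0}^{n-1} b_i.
\end{equation*}
The weights $t_{i+1}-t_i$ on the left are not individually controlled by $\delta$, but their partial sums $t_k-t_0$ are, by the very definition of $\delta$. This strongly suggests Abel summation (summation by parts).

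Carrying this out, I would write
\begin{equation*}
\sum_{i=0}^{n-1}(t_{i+1}-t_i)\,b_i \;=\; \sum_{k=1}^{n-1}(t_k-t_0)(b_{k-1}-b_k) \;+\; (t_n-t_0)\,b_{n-1},
\end{equation*}
which is a telescoping rearrangement (the telescoping is straightforward: group each $b_i$ according to the $t_{i+1}$ and $-t_i$ in which it appears after subtracting a shifted copy). Now the definition of $\delta$ gives $t_k-t_0 \le \delta k$ for every $1\le k\le n$, and by hypothesis $b_{k-1}-b_k \ge 0$ and $b_{n-1}\ge 0$, so the coefficients multiplying these nonnegative quantities can be bounded term-by-term:
\begin{equation*}
\sum_{i=0}^{n-1}(t_{i+1}-t_i)\,b_i \;\le\; \delta\Bigl(\sum_{k=1}^{n-1} k\,(b_{k-1}-b_k) \;+\; n\,b_{n-1}\Bigr).
\end{equation*}

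The final step is to recognize that the quantity in parentheses is exactly $\sum_{j=0}^{n-1} b_j$. I would verify this by Abel summation in the reverse direction: expanding and reindexing,
\begin{equation*}
\sum_{k=1}^{n-1} k\,(b_{k-1}-b_k) \;=\; \sum_{j=0}^{n-2} b_j \;-\; (n-1)\,b_{n-1},
\end{equation*}
so adding $n\,b_{n-1}$ gives $\sum_{j=0}^{n-1} b_j$, finishing the proof. Exponentiating recovers the stated multiplicative inequality.

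The proof is essentially a one-shot Abel summation, so there is no serious obstacle; the only point that requires a little care is getting the index bookkeeping right in the telescoping and making sure the monotonicity hypothesis $a_0\ge \cdots \ge a_{n-1}\ge 1$ (equivalently $b_{k-1}-b_k\ge 0$ and $b_{n-1}\ge 0$) is used precisely to justify the term-by-term replacement $t_k-t_0 \le \delta k$ inside the sum. No other hypothesis (such as the integrality of the $t_i$ or the strict inequalities $t_{i-1}<t_i$) is actually needed beyond ensuring that $\delta$ is finite and that the exponents $t_{i+1}-t_i$ are nonnegative.
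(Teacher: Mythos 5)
Your proof is correct, and the paper itself gives no proof of this lemma (it is quoted from \cite{Qjge} and \cite{Qpcf}); the argument in those references is essentially the same Abel-summation/telescoping rearrangement, written multiplicatively as $\prod_{s=1}^{n-1}(a_{s-1}/a_s)^{t_s-t_0}\cdot a_{n-1}^{t_n-t_0}$ with $t_k-t_0\le\delta k$ applied to each nonnegative factor. Your identities check out, and your closing remark that only the nonnegativity of the increments $t_{i+1}-t_i$ and the finiteness of $\delta$ are used is accurate.
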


\begin{lemma}[{see \cite[Lemma 2.5.2]{V87} and also \cite[Theorem 2.1(h)]{Sil87}}]\label{3.3} 
Let $Y$ be a closed subscheme of $V$, and let $\tilde{V}$ be the blowing-up of $V$ along $Y$ with exceptional divisor $E$. Then $\lambda_{Y, v}(\pi(P))=\lambda_{E, v}(P)+O_{v}(1)$ for $P \in \tilde{V} \backslash {\rm Supp} E$. 
\end{lemma}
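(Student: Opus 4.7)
The proof rests on the universal property of the blow-up: the pullback of the ideal sheaf becomes invertible, namely $\pi^{-1}\mathcal{I}_Y \cdot \mathcal{O}_{\tilde V} = \mathcal{O}_{\tilde V}(-E)$. The plan is to exploit this identity to compare the pullback of local generators of $\mathcal{I}_Y$ with a local defining equation of $E$, the two agreeing up to a system of functions bounded above and below.

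First I would recall the intrinsic local description of $\lambda_{Y,v}$ implicit in the introduction: if $f_1,\ldots,f_r$ are local generators of $\mathcal{I}_Y$ on an affine open $U \subset V$, then
$$\lambda_{Y,v}(x) = -\log \max_{1 \le i \le r}|f_i(x)|_v + O_v(1) \qquad (x \in U \setminus Y),$$
and the analogous statement $\lambda_{E,v}(P) = -\log |e(P)|_v + O_v(1)$ holds on any affine open $\tilde U \subset \tilde V$ on which $E$ is cut out by a single regular function $e$. These formulas follow from the definition $\lambda_{Y,v} = \min_i \lambda_{D_i,v}$ after presenting $Y$ as a scheme-theoretic intersection of Cartier divisors associated to the $f_i$ and a fixed auxiliary very ample divisor.

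Next I would cover $\tilde V$ by finitely many affine opens $\tilde U_\alpha$ such that $\pi(\tilde U_\alpha) \subset U_\alpha$ with $\mathcal{I}_Y|_{U_\alpha}$ generated by $f^{(\alpha)}_1,\ldots,f^{(\alpha)}_{r_\alpha}$, and such that $E|_{\tilde U_\alpha}$ is principal, defined by some $e_\alpha$. The invertibility identity then forces factorizations $\pi^* f^{(\alpha)}_i = e_\alpha \, g^{(\alpha)}_i$, where the $g^{(\alpha)}_i$ generate the unit ideal on $\tilde U_\alpha$, giving
$$\max_i |\pi^* f^{(\alpha)}_i(P)|_v = |e_\alpha(P)|_v \cdot \max_i |g^{(\alpha)}_i(P)|_v.$$
Taking $-\log$ and applying the two formulas above yields the claimed identity on each $\tilde U_\alpha$, provided the factor $\max_i |g^{(\alpha)}_i(P)|_v$ is $M_K$-bounded uniformly in $P$.

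The main obstacle is establishing that uniform bound. The upper bound is immediate by continuity on the relative compactum. For the lower bound, since $(g^{(\alpha)}_1,\ldots,g^{(\alpha)}_{r_\alpha})$ is the unit ideal in $\mathcal{O}(\tilde U_\alpha)$, one writes $\sum_i h^{(\alpha)}_i g^{(\alpha)}_i = 1$ for suitable regular $h^{(\alpha)}_i$; the triangle or ultrametric inequality then forces $\max_i |g^{(\alpha)}_i(P)|_v$ away from zero in terms of the uniformly bounded $|h^{(\alpha)}_i(P)|_v$. Patching over the finite cover $\{\tilde U_\alpha\}$ of $\tilde V \setminus \mathrm{Supp}\,E$ — the hypothesis $P \notin \mathrm{Supp}\,E$ being exactly what guarantees $e_\alpha(P) \ne 0$ and hence that $\lambda_{E,v}(P)$ is defined — yields the global statement $\lambda_{Y,v}(\pi(P)) = \lambda_{E,v}(P) + O_v(1)$, which is the functorial property of Weil functions recorded in Vojta's Lemma 2.5.2 and Silverman's Theorem 2.1(h).
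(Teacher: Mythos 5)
The paper gives no proof of this lemma at all --- it is quoted directly from Vojta's Lemma 2.5.2 and Silverman's Theorem 2.1(h) --- and your argument correctly reconstructs the standard proof found in those sources: the universal property $\pi^{-1}\mathcal{I}_Y\cdot\mathcal{O}_{\tilde V}=\mathcal{O}_{\tilde V}(-E)$ gives local factorizations $\pi^*f_i=e\,g_i$ with the $g_i$ generating the unit ideal, whence $\log\max_i|g_i(P)|_v=O_v(1)$ and the two Weil functions agree up to a bounded term. The only point to tighten is that at non-archimedean places ``continuity on the relative compactum'' should be replaced by the standard formalism of $M_K$-bounded subsets refining a finite affine cover of $\tilde V$ (as in Lang or Vojta), on which regular functions are $M_K$-bounded above; combined with your partition-of-unity lower bound this closes the argument.
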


\begin{lemma}[{see \cite[Lemma 5.4 .24]{Laz17}}]\label{3.4}
Let $X$ be projective variety, $\mathcal{I}$ be a coherent ideal sheaf. Let $\pi: \tilde{X} \rightarrow X$ be the blowing-up of $\mathcal{I}$ with exceptional divisor $E$. Then there exists an integer $p_{0}=p_{0}(\mathcal{I})$ with the property that if $p \geq p_{0}$, then $\pi_{*} \mathcal{O}_{\tilde{X}}(-p E)=\mathcal{I}^{p}$, and moreover, for any divisor $D$ on $X$,
$$H^{i}\left(X, \mathcal{I}^{p}(D)\right)=H^{i}\left(\tilde{X}, \mathcal{O}_{\tilde{X}}\left(\pi^{*} D-p E\right)\right)$$
for all $i \geq 0$.
\end{lemma}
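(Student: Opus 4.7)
The plan is to realize $\tilde X$ as a relative $\mathrm{Proj}$ over $X$ and then invoke relative Serre vanishing applied to the Rees algebra. I would take $\mathcal R=\mathcal R(\mathcal I):=\bigoplus_{p\ge 0}\mathcal I^p$, viewed as a sheaf of graded $\mathcal O_X$-algebras. Because $\mathcal I$ is coherent on the projective variety $X$, $\mathcal R$ is a quasi-coherent sheaf of $\mathcal O_X$-algebras, locally generated in degree one by local generators of $\mathcal I$, and by the universal property of blowing-up one has a canonical identification $\tilde X=\mathrm{Proj}_X(\mathcal R)$ under which the tautological line bundle $\mathcal O_{\tilde X}(1)$ corresponds to $\mathcal O_{\tilde X}(-E)$.

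The first step is a Serre-type statement in the relative setting. Since $\mathcal R$ is generated in degree one, $\pi:\tilde X\to X$ is projective and $\mathcal O_{\tilde X}(1)$ is $\pi$-very ample. I would then invoke the standard Serre vanishing/generation theorem on relative Proj (e.g.\ EGA III.2.2.1 and III.2.3.1, or Hartshorne III.8.8 applied fibrewise together with its global graded-module refinement) to obtain an integer $p_0=p_0(\mathcal I)$ such that, for every $p\ge p_0$,
$$\pi_*\mathcal O_{\tilde X}(-pE)=\mathcal R_p=\mathcal I^p\quad\text{and}\quad R^i\pi_*\mathcal O_{\tilde X}(-pE)=0\ \text{ for all }i\ge 1.$$
The first equality is the statement that the natural inclusion $\mathcal I^p\hookrightarrow\pi_*\mathcal O_{\tilde X}(-pE)$ becomes an isomorphism for $p\gg 0$; the vanishing of higher direct images is Serre vanishing for the $\pi$-ample $\mathcal O_{\tilde X}(1)$.

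The second step derives the cohomology identity from the first. By the projection formula applied to the Cartier pullback $\pi^*D$,
$$R^i\pi_*\bigl(\mathcal O_{\tilde X}(\pi^*D-pE)\bigr)\;=\;R^i\pi_*\mathcal O_{\tilde X}(-pE)\otimes\mathcal O_X(D),$$
which for $p\ge p_0$ equals $\mathcal I^p(D)$ when $i=0$ and vanishes when $i\ge 1$. The Leray spectral sequence
$$E_2^{i,j}=H^i\bigl(X,R^j\pi_*\mathcal O_{\tilde X}(\pi^*D-pE)\bigr)\Rightarrow H^{i+j}\bigl(\tilde X,\mathcal O_{\tilde X}(\pi^*D-pE)\bigr)$$
then collapses to the row $j=0$, producing the desired isomorphism $H^i(X,\mathcal I^p(D))=H^i(\tilde X,\mathcal O_{\tilde X}(\pi^*D-pE))$ for every $i\ge 0$ and every $p\ge p_0$.

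The main obstacle is securing the first step uniformly, that is, producing a single threshold $p_0$ from which both the equality $\pi_*\mathcal O_{\tilde X}(-pE)=\mathcal I^p$ on the nose and the vanishing $R^i\pi_*\mathcal O_{\tilde X}(-pE)=0$ hold simultaneously. For small $p$ the sheaf $\pi_*\mathcal O_{\tilde X}(-pE)$ can be strictly larger than $\mathcal I^p$ (it always contains the integral closure $\overline{\mathcal I^p}$), so one genuinely needs the finite generation of $\mathcal R$ in degree one to force $\mathcal O_{\tilde X}(1)$ to be $\pi$-very ample and to make the generation and vanishing thresholds coincide past $p_0$. Once that uniform threshold is in hand, the Leray-plus-projection-formula argument above is entirely routine.
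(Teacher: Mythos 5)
Your argument is correct and is essentially the standard proof of this result: the paper itself offers no proof, quoting the statement directly from Lazarsfeld's Lemma 5.4.24, whose proof is exactly your combination of the Rees-algebra description $\tilde X=\mathrm{Proj}_X\bigl(\bigoplus_p\mathcal I^p\bigr)$, relative Serre vanishing giving $\pi_*\mathcal O_{\tilde X}(-pE)=\mathcal I^p$ and $R^i\pi_*\mathcal O_{\tilde X}(-pE)=0$ for $p\gg 0$, and then the projection formula with the degenerate Leray spectral sequence. Your remark that $\pi_*\mathcal O_{\tilde X}(-pE)$ can exceed $\mathcal I^p$ for small $p$ correctly identifies why the threshold $p_0$ is needed, so nothing is missing.
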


We now prove Theorem \ref{1.8}. The proof basically follows Heier-Levin's proof (see \cite[Page 7]{HL}).

\begin{proof}[Proof of Theorem \ref{1.8}]
Denote by $\mathcal{I}_{i, v}$ the ideal sheaf of $Y_{i, v}$. Let $\pi_{i, v}: \tilde{X}_{i, v} \rightarrow X$ be the blowing-up of $X$ along $Y_{i, v}$ and $E_{i, v}$ be the exceptional divisor on $\tilde{X}_{i, v}$. Fix the real number $\epsilon>0$. Choose a rational number $\delta>0$ such that
$$\delta_X(n+1+\delta)(1+\delta)<\epsilon.$$
Then for a small enough positive rational number $\delta'$ depending on $\delta, \delta \pi^{*} A-\delta' E_{i, v}$ is an $\mathbb{Q}$-ample on $\tilde{X}_{i, v}$ for all $i \in\{0, \ldots, l\}$ and $v \in S$. By the definition of Seshadri constant, there exists a rational number $\epsilon_{i, v}>0$ such that
$$\epsilon_{i, v}+\delta' \geq \epsilon_{Y_{i, v}}(A)$$
and $\pi_{i, v}^{*} A-\epsilon_{i, v} E_{i, v}$ is $\mathbb{Q}$-nef on $\tilde{X}_{i, v}$ for all $0 \leq i \leq l, v \in S$. With such choices, we have $(1+\delta) \pi_{i, v}^{*} A-\left(\epsilon_{i, v}+\delta'\right) E_{i, v}$ is ample $\mathbb{Q}$-divisor on $\tilde{X}_{i, v}$ for all $i, v$. Let $N$ be an integer large enough such that $N(1+\delta) \pi_{i, v}^{*} A$ and $N\left[(1+\delta) \pi_{i, v}^{*} A-\left(\epsilon_{i, v}+\delta'\right) E_{i, v}\right]$ are very ample integral divisors on $\tilde{X}_{i, v}$ for all $0 \leq i \leq l$ and $v \in S$.

Fix $v \in S$, we claim that there are divisors $F_{0, v}, \ldots, F_{n, v}$ on $X$ such that
\begin{itemize}
\item[(a)] $N(1+\epsilon) A \sim F_{i, v}$ and $\pi_{i, v}^{*} F_{i, v} \geq N\left(\epsilon_{i, v}+\delta'\right) E_{i, v}$ on $\tilde{X}_{i, v}$ for all $i=0, \ldots, q$.
\item[(b)] The distributive constant of the family of divisors $\{F_{0, v}, \ldots, F_{q, v}\}$ with respect to $X$ does not exceed $\delta_X$.
\end{itemize}
We construct these divisors by induction as follows. Assume $F_{0, v}, \ldots, F_{j-1, v}$ have been constructed so that the assertion (a) holds for all $i=1,\ldots,j-1$ and the distributive constant of the family $\{F_{0, v}, \ldots, F_{j-1, v}, Y_{j, v}, \ldots, Y_{l, v}\}$ with respect to $X$ does not exceed $\delta_X$. To find $F_{j, v}$, we let $\tilde{F}_{i, v}^{(j)}=\pi_{j, v}^{*} F_{i, v}, i=0, \ldots, j-1$, and $\tilde{Y}_{i, v}^{(j)}=\pi_{j, v}^{*} Y_{i, v}$ for $i=j+1, \ldots, q$. 

We choose $s_{j, v} \in$ $H^{0}\left(\tilde X_{j,v}, N(1+\delta)\pi^*A-N\left(\epsilon_{j, v}+\delta'\right)E_{j, v}\right)$ such that $s_{j, v}$ does not vanish indentically on any irreducible components of any $\left(\bigcap_{i \in I}\tilde F_{i,v}\cap\bigcap_{i \in J}\tilde Y_{i, v}\right)$, where $I\subset\{1,\ldots,j-1\}$ and $J\subset\{j,\ldots,l\}$ are not both empty. 
Put $\tilde F_{j, v}:={\rm div}\left(s_{i, v}\right)+N\left(\epsilon_{j, v}+\delta'\right)\tilde Y_{j, v}$. By Lemma \ref{3.4}, we have, for $N$ big enough,
$$H^{0}\left(X, \mathcal{O}_{X}(N(1+\delta) A) \otimes \mathcal{I}_{j, v}^{N\left(\epsilon_{j, v}+\delta'\right)}\right)=H^{0}\left(\tilde{X}_{j, v}, \mathcal{O}_{\tilde{X}_{j, v}}\left(N\left((1+\delta) \pi_{j, v}^{*} A-\left(\epsilon_{j, v}+\delta'\right) E_{j, v}\right)\right)\right).$$
Then, there exists a divisor $F_{j,v}$ on $X$ such that $\tilde F_{j, v}=\pi^*F_{j,v}$. 

For $I\subset\{1,\ldots,j-1\}$ and $J\subset\{j,\ldots,l\}$, not both empty, we consider an irreducible component $\Gamma$ with maximal dimension of $F_{j,v}\cap\bigcap_{i \in I}F_{i,v}\cap\bigcap_{i \in J} Y_{i, v}$. If $\Gamma\subset Y_{j,v}$ then we have
$$\dim\Gamma\le \dim Y_{j,v}\cap\bigcap_{i \in I}F_{i,v}\cap\bigcap_{i \in J} Y_{i, v}.$$
If $\Gamma\not\subset Y_{j,v}$ then $\pi^*\Gamma\not\subset{\rm div}\left(s_{j, v}\right)$, and hence
$$\dim\Gamma=\dim (\pi^*\Gamma\setminus E_{j,v})\le \dim(\bigcap_{i \in I}\tilde F_{i,v}\cap\bigcap_{i \in J}\tilde Y_{i, v}\setminus E_{j,v})-1\le \dim(\bigcap_{i \in I}F_{i,v}\cap\bigcap_{i \in J}Y_{i, v}\setminus Y_{j,v})-1,$$
(since $\pi$ is isomorphic outside $E_{j,v}$). Therefore, we have
\begin{align*}
\max&\left\{1,\frac{1+\sharp I+\sharp J}{n-\dim\Gamma}\right\}\\
&\le\max\left\{1; \frac{1+\sharp I+\sharp J}{n-\dim Y_{j,v}\cap\bigcap_{i \in I}F_{i,v}\cap\bigcap_{j \in J} Y_{j, v}};\frac{\sharp I+\sharp J}{n-\dim(\bigcap_{i \in I}F_{i,v}\cap\bigcap_{j \in J}Y_{j, v}\setminus Y_{j,v})}\right\}.
\end{align*}
Hence, the distributive constant of $\{F_{0, v}, \ldots, F_{j, v}, Y_{j+1, v}, \ldots, Y_{q, v}\}$ with respect to $X$ does not exceed the distributive of $\{F_{0, v}, \ldots, F_{j-1, v}, Y_{j, v}, \ldots, Y_{q, v}\}$, in particular not exceed $\delta_X$. Also, by the construction, we have $\pi_{j, v}^{*} F_{j, v} \geq N\left(\epsilon_{j, v}+\delta'\right) E_{j, v}$ on $\tilde{X}_{j, v}$. So the claim holds by induction.

Since $\pi_{j, v}^{*} F_{j, v} \geq N\left(\epsilon_{j, v}+\delta'\right) E_{j, v}$ on $\tilde{X}_{i, v}$, by applying Lemma \ref{3.3}, for every point $P \in \tilde{X}_{j, v} \backslash {\rm Supp} E_{j, v}$, we have
\begin{align*}
N \epsilon_{Y_{j, v}}(A) \lambda_{Y_{j, v}, v}\left(\pi_{j, v}(P)\right) & \leq N\left(\epsilon_{j, v}+\delta'\right) \lambda_{Y_{j, v}, v}\left(\pi_{j, v}(P)\right) \\
&=N\left(\epsilon_{j, v}+\delta'\right) \lambda_{E_{j, v}, v}(P) \\
& \leq \lambda_{\pi_{j, v}^{*} F_{j, v}, v}(P)=\lambda_{F_{j, v}, v}\left(\pi_{j, v}(P)\right)
\end{align*}
Then, for every $x\not\in\bigcup_{j,v}Y_{j,v}$, we have
$$ N \sum_{v \in S} \sum_{j=0}^{q} \epsilon_{Y_{j, v}}(A) \lambda_{Y_{j, v}, v}(x)\leq \sum_{v \in S} \sum_{j=0}^{q} \lambda_{F_{j, v}}(x).$$

Note that $\delta_X(n+1+\delta)(1+\delta)<\epsilon$. Therefore, in oder to finish the proof of the theorem, it suffices for us to prove the following claim.
\begin{claim} There is a a proper Zariski-closed subset $Z$ of $X$ such that for all ${\bf x}\in X\setminus Z$,
$$\sum_{v \in S} \sum_{j=0}^{q} \lambda_{F_{i, v}, v}({\bf x}) \leq \delta_X[(n+1)+\delta] h_{N(1+\delta) A}({\bf x}).$$
\end{claim}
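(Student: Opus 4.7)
The plan is to reduce the family $\{F_{0,v},\ldots,F_{q,v}\}$ for each $v\in S$ to a family of exactly $n+1$ divisors $\{P_{0,v},\ldots,P_{n,v}\}$ in general position on $X$ via Lemma \ref{3.1}, prove a Weil-function inequality that carries the distributive-constant factor $\delta_X$, and then invoke Theorem B of Evertse--Ferretti (for families in general position) applied to the $P_{u,v}$'s.

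First, I would fix $v\in S$ and a point $x\in X$ off every $F_{j,v}$, then reorder the divisors so that $\lambda_{F_{0,v},v}(x)\geq\cdots\geq\lambda_{F_{q,v},v}(x)$, and let $0=t_0<t_1<\cdots<t_n$ be the indices at which $\dim X\cap\bigcap_{j\leq s}F_{j,v}$ strictly drops. There are only finitely many such orderings and index-sequences as $x$ varies over $X$, so it suffices to partition $X\setminus\bigcup_{v,j}F_{j,v}$ into finitely many sets on each of which these data are fixed, and to prove the bound on each separately. On a fixed piece, I apply Lemma \ref{3.1} (first enlarging the family by a generic section of $|N(1+\delta)A|$, if needed to make the intersection empty) to produce $P_{0,v},\ldots,P_{n,v}$ in general position on $X$, each linearly equivalent to $N(1+\delta)A$ and each a generic linear combination of $F_{0,v},\ldots,F_{t_u,v}$.

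The heart of the argument is the Weil-function inequality
\[
\sum_{j=0}^{q}\lambda_{F_{j,v},v}(x)\;\leq\;\delta_X\sum_{u=0}^{n}\lambda_{P_{u,v},v}(x)\;+\;O_v(1),
\]
which I would derive in the spirit of Theorem \ref{2.2}: the construction $\tilde P_{u,v}=\sum_{j\leq t_u}c_{uj}\tilde F_{j,v}$ yields $\lambda_{P_{u,v},v}(x)\geq\lambda_{F_{t_u,v},v}(x)-O_v(1)$; combining this with the decreasing ordering of the $\lambda_{F_{j,v}}$'s and the bound $t_u+1\leq\delta_X(u+1)$ forced by the distributive-constant hypothesis, and invoking Lemma \ref{3.2} with the (positified) Weil-function values in the roles of the $a_u$'s, produces the displayed inequality. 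Once this is in hand, Theorem B of Evertse--Ferretti applied to $\{P_{u,v}\}$ (with small parameter $\delta/(2\delta_X)$) gives a proper Zariski-closed $Z'\subset X$ outside which
\[
\sum_{v\in S}\sum_{u=0}^n\lambda_{P_{u,v},v}(x)\;\leq\;\Bigl(n+1+\tfrac{\delta}{2\delta_X}\Bigr)h_{N(1+\delta)A}(x).
\]
Plugging this into the previous inequality yields $\sum_{v,j}\lambda_{F_{j,v},v}(x)\leq\delta_X(n+1)h_{N(1+\delta)A}(x)+\tfrac{\delta}{2}h_{N(1+\delta)A}(x)+O(1)$; the $O(1)$ term is absorbed into $\tfrac{\delta}{2}h_{N(1+\delta)A}(x)$ by enlarging $Z$ to contain a bounded-height subset. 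Taking the union of the resulting exceptional sets over the finitely many pieces gives the single required proper Zariski-closed $Z\subset X$.

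The main obstacle will be the central Weil-function inequality above: matching the decreasing ordering of the $\lambda_{F_{j,v}}$'s with the dimension-drop sequence $(t_u)$, correctly deploying Lemma \ref{3.2} with the right shift so that the distributive-constant bound $t_u+1\leq\delta_X(u+1)$ manifests as the overall factor $\delta_X$, and carefully handling terms with $\lambda_{F_{j,v},v}(x)\leq 0$ (which must be absorbed into the $O_v(1)$) all require delicate combinatorial bookkeeping. The application of Theorem B and the final combination are routine in comparison.
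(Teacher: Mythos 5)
Your proposal matches the paper's argument essentially step for step: the paper also partitions points by the decreasing order of the $\lambda_{F_{j,v},v}$'s and the dimension-drop sequence $(t_{u,v})$, uses Lemma \ref{3.2} to extract the factor $\delta_X$, uses Lemma \ref{3.1} (after embedding $X$ by $|N(1+\delta)A|$ so the $F_{j,v}$ become pullbacks of hyperplanes) to produce $n+1$ divisors in general position dominating the $\lambda_{F_{t_u,v},v}$'s, and then applies Theorem B to that family, absorbing the $O(1)$ by discarding a bounded-height set. This is the same proof, so no further comparison is needed.
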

By adding more divisors if necessary, we assume that $\bigcup_{i=0}^{q}F_{i, v}=\emptyset$. There is a permutation $I^v=(I^v(0),\ldots,I^v_q)$ of $\{0,\ldots,q\}$ such that 
$$ \lambda_{F_{I^v(0), v}, v}({\bf x})\ge\cdots\ge \lambda_{F_{I^v(0), v}, v}({\bf x}),\ \forall v\in S.$$
Let $l_v$ be the smallest index such that $\bigcup_{i=0}^{l_v}F_{I^v(i), v}=\emptyset$ for each $v\in S$. Then there are indexes $0=t_{0,v}<t_{1,v}<\cdots<t_{n,v}=l_v$ such that
$$\dim\left(\bigcap_{j=0}^{s} F_{I^v(j),v}\right) \cap X=n-u\ \forall\ 1 \leq u \leq n; t_{u-1,v}\le s\le t_{u,v}-1.$$
Then, $\delta_X\ge \frac{t_{u,v}}{u}$ for all $u=1,\ldots, n$.

Denote by $\phi: X \rightarrow \mathbb{P}^{\tilde N_v}(k)$ the canonical embedding associated to the very ample divisor $N(1+\delta) A$ and let $H_{0,v}, \ldots, H_{q,v}$ be the hyperplanes in $\mathbb{P}^{\tilde {N}}(k)$ with $F_{j,v}=\phi^{*} H_{j,v}$ for $j=0, \ldots, q$. We denote $\tilde H_{0,v}, \ldots, \tilde H_{q,v}$ the linear forms defining $H_{0,v}, \ldots, H_{q,v}$ respectively. By Lemma \ref{3.1}, there exist hyperplanes $L_{0,v}, \ldots, L_{n,v}$ with defining linear forms $\tilde{L}_{0,v}, \ldots, \tilde{L}_{n,v}$, such that $\tilde{L}_{0,v}=\tilde H_{I^v(0),v}$, and for every $s \in\{1, \ldots, n\}$, $\tilde{L}_{s,v} \in {\rm span}_{k}\left(\tilde H_{I^v(0),v}, \ldots, \tilde H_{I^v(t_{s}),v}\right)$ and $\phi^{*}L_{0,v}, \ldots, \phi^{*}L_{n,v}$ are located in general position on $X$. Applying Theorem B to $\phi^{*}L_{0,v}, \ldots, \phi^{*}L_{n,v}$, we conclude that, for $\delta >0$, there exists a Zariski closed set $Z$ such that for all ${\bf x} \in X(k) \setminus Z$,
$$\sum_{v \in S} \sum_{i=0}^{n} \lambda_{\phi^{*}L_{i, v}, v}({\bf x}) \leq \delta_X[(n+1)+\delta] h_{N(1+\delta) A}({\bf x}).$$
Hence, for $P=\phi ({\bf x})$, we have
\begin{align*}
\sum_{v \in S}\sum_{i=0}^{q} \lambda_{H_{j,v}}(P)&=\sum_{v \in S}\sum_{j=0}^{l_v} \lambda_{H_{I^v(t_j),v}}(P)+O(1)\\ 
&\le\sum_{v \in S}\sum_{s=0}^{n-1} \left(t_{s+1,v}-t_{s,v}\right) \lambda_{H_{I^v(t_s),v}}(P)+\lambda_{H_{I^v(t_n),v}}(P)+O(1) \\
\text{[by Lemma \ref{3.2}] }&\le\sum_{v \in S}\delta_X\sum_{s=0}^{n-1} \lambda_{H_{I^v(t_s),v}}(P)+\lambda_{H_{I^v(t_n),v}}(P)+O(1) \\
\text{[since $\lambda_{H_{I^v(t_s),v}}(P)\le \lambda_{L_{s,v}}(P)$] }&\le\delta_X\sum_{v \in S}\sum_{s=0}^{n} \lambda_{L_{s,v}}(P)+O(1)\\
&\le\delta_X\sum_{v \in S}\sum_{s=0}^{n} \lambda_{\phi^*L_{s,v}}({\bf x})+O(1)\\
&\le\delta_X[(n+1)+\delta] h_{N(1+\delta) A}({\bf x})+O(1).
\end{align*}
Here, we note that there are only many finite points $x\in X$ such that $h_{N(1+\delta) A}(x)$ is bounded above by a bounded term $O(1)$ and there are only many finite hyperplanes $\tilde H_{s,v}$ ocurring above (althrough the family $\{\tilde H_{s,v}\}$ depends on $x$). Therefore,
 $$ \sum_{v \in S}\sum_{i=0}^{q} \lambda_{F_{j,v}}({\bf x})\le\delta_X[(n+1)+\delta] h_{N(1+\delta) A}({\bf x})$$
for all $x\in X(k)$ outside a Zariski closed proper subset. Then, the claim is proved and the proof of the theorem is follows. 
\end{proof}

\begin{remark}{\rm As we known that there is a close relationship between Diophantine approximation and Nevanlinna theory due to the works of Osgood (see \cite{Os81,Os85}) and Vojta (see \cite{V87,V89,V97}). Especially, Vojta has given a dictionary which provides the correspondences for the
fundamental concepts of these two theories. Via this dictionary, the subspace theorem corresponds to the second main theorem in Nevanlinna theory. By the standard notions in Nevanlinna theory, with the usal arguments, the proof of Theorem \ref{1.8} can be adapted to prove the following generalization of the Second Main Theorem in the side of Nevanlinna theory.}
\end{remark}
\begin{theorem}
Let $X$ be a complex projective variety of $n$-dimension, $Y_{0}, \ldots, Y_{q}$ closed subschemes of $X$. Let $f:\C \rightarrow X$ be a holomorphic map with Zariski dense image, $A$ an ample Cartier divisor on $X$, $\delta$ a positve number, and $\epsilon>0$. Then
$$\int_{0}^{2 \pi} \max _{J} \sum_{j \in J} \epsilon_{Y_j}(A) \lambda_{Y_{j}}(f(r e^{i \theta})) \frac{d \theta}{2 \pi} \leq_{\rm exc}(\delta(n+1)+\epsilon)T_{f, A}(r),$$
where the maximum is taken over all subsets $J$ of $\{0, \ldots, q\}$ such that the distributive constant of the family $\{Y_{j}, j \in J\}$ with respect to $X$ at most $\delta$, and the notation $\leq_{\mathrm{exc}}$ means that the inequality holds for all $r \in(0, \infty)$ outside of a set of finite Lebesgue measure.
\end{theorem}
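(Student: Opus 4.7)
The plan is to transport the proof of Theorem \ref{1.8} across Vojta's dictionary: sums $\sum_{v\in S}$ become integrals $\int_0^{2\pi}(\cdot)\frac{d\theta}{2\pi}$, local Weil functions $\lambda_{D,v}(x)$ become proximity contributions $\lambda_D(f(re^{i\theta}))$, absolute logarithmic heights $h_A(x)$ become characteristic functions $T_{f,A}(r)$, and the appeal to Theorem B is replaced by the corresponding Second Main Theorem for holomorphic curves meeting divisors in general position on a projective variety (applied after a projective embedding associated to a very ample divisor). Since the supremum over $J$ ranges over the finite collection of subsets of $\{0,\ldots,q\}$ whose distributive constant with respect to $X$ is at most $\delta$, I would establish the asserted bound separately for each admissible $J$ and then take the pointwise maximum under the integral, merging the finitely many exceptional $r$-sets.

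Fix $\epsilon>0$ and choose a rational $\delta'>0$ with $\delta(n+1+\delta')(1+\delta')\le\delta(n+1)+\epsilon$. For each $Y_j$, form the blow-up $\pi_j:\tilde X_j\to X$ with exceptional divisor $E_j$, and pick a small rational $\delta''>0$ so that $\delta'\pi_j^*A-\delta''E_j$ is $\mathbb{Q}$-ample on $\tilde X_j$. For every fixed admissible $J$, I would invoke Lemma \ref{3.4} together with a careful generic choice of sections to inductively construct divisors $\{F_j\}_{j\in J}$ on $X$ satisfying $N(1+\delta')A\sim F_j$, $\pi_j^*F_j\ge N(\epsilon_j+\delta'')E_j$, and such that the distributive constant of $\{F_j\}_{j\in J}$ with respect to $X$ is still bounded by $\delta$; this is the exact inductive step already carried out in the proof of Theorem \ref{1.8}. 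Lemma \ref{3.3} then trades each Seshadri-weighted proximity contribution $\epsilon_{Y_j}(A)\lambda_{Y_j}(f(re^{i\theta}))$ for $\lambda_{F_j}(f(re^{i\theta}))$ up to an additive $O(1)$.

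Next, I would embed $X\hookrightarrow\P^{\tilde N_J}$ via the complete linear system $|N(1+\delta')A|$, write each $F_j=\phi_J^*H_j$, and apply Lemma \ref{3.1} to produce $n+1$ hyperplanes $L_0,\ldots,L_n$ in general position on $X$ whose defining linear forms lie in spans dictated by a $\theta$-dependent ordering of the proximity values. Since $f$ has Zariski-dense image, so does $\phi_J\circ f$, and the Second Main Theorem for holomorphic curves into a projective variety with respect to hyperplane sections in general position yields
$$\int_0^{2\pi}\sum_{s=0}^n\lambda_{\phi_J^*L_s}(f(re^{i\theta}))\frac{d\theta}{2\pi}\le_{\rm exc}(n+1+\delta')T_{f,N(1+\delta')A}(r).$$
Lemma \ref{3.2} then bootstraps this general-position estimate, exactly as in the final display of the proof of Theorem \ref{1.8}, into a bound of the form $\int_0^{2\pi}\sum_{j\in J}\lambda_{F_j}(f(re^{i\theta}))\frac{d\theta}{2\pi}\le_{\rm exc}\delta(n+1+\delta')T_{f,N(1+\delta')A}(r)$, which, combined with the Seshadri trade and divided by $N(1+\delta')$, gives the claim for the fixed $J$.

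The principal obstacle is that the combinatorial data used to apply Lemmas \ref{3.1} and \ref{3.2} — namely the ordering $I^v$ of the proximity values and the indices $t_{s,v}$ — depends on the argument $re^{i\theta}$ rather than on a single Diophantine point $x$. Since for each admissible $J$ there are only finitely many possible orderings of $\{\lambda_{F_j}\}_{j\in J}$, I would prove the pointwise inequality for each ordering, take the maximum among them, and finally maximize over the finite set of admissible $J$; the union of the resulting finitely many exceptional $r$-sets remains of finite Lebesgue measure, and the finitely many additive $O(1)$ errors are absorbed into the $\epsilon$-slack built into the choice of $\delta'$.
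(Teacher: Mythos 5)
Your overall strategy is exactly the one the paper intends: the paper gives no separate proof of this theorem, only the remark that the proof of Theorem \ref{1.8} adapts via Vojta's dictionary, and your transport of each step (the inductive construction of the $F_j$ via Lemma \ref{3.4}, the Seshadri trade via Lemma \ref{3.3}, the embedding by $|N(1+\delta')A|$, Lemma \ref{3.1} to pass to hyperplanes in general position, and Lemma \ref{3.2} for the combinatorial bootstrap) is the right adaptation. You have also correctly identified the one point where the analogy with the arithmetic case genuinely breaks down, namely that the ordering $I^v$ and the indices $t_{s,v}$ now depend on $\theta$.

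However, your proposed resolution of that point does not work as stated. In the arithmetic setting one applies Theorem B once for each of the finitely many possible tuples $(L_0,\ldots,L_n)$ and takes the union of the exceptional Zariski-closed sets, because the conclusion is a pointwise statement about $x$. In the Nevanlinna setting the conclusion is an inequality between an integral and $T_{f,A}(r)$, and applying the sum-form Second Main Theorem you display to each tuple separately and ``merging the finitely many exceptional $r$-sets'' only bounds $\max_{\alpha}\int_0^{2\pi}\sum_{s}\lambda_{L_s^{(\alpha)}}(f(re^{i\theta}))\frac{d\theta}{2\pi}$, whereas after taking the pointwise maximum over orderings and over admissible $J$ you need to bound $\int_0^{2\pi}\max_{\alpha}\sum_{s}\lambda_{L_s^{(\alpha)}}(f(re^{i\theta}))\frac{d\theta}{2\pi}$, and $\int\max$ is not controlled by $\max\int$. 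What is needed here is the strong form of the hyperplane Second Main Theorem in which the maximum over general-position subfamilies already sits inside the integral (Vojta's general form of Cartan's theorem \cite{V97}, which is also the shape of the Wang--Cao--Cao result \cite{WCC} cited at the end); this is precisely why the theorem's conclusion is stated with $\max_J$ inside the integral rather than outside. With that form of the SMT substituted for the sum-form inequality you wrote, the rest of your argument goes through.
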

Here, $T_{f, A}(r)$ is the characteristic function of the holomorphic curve $f$ with respect to the ample line bundle generated by the divisor $A$. This theorem also gives a generalization for the recent second main theorem of  Wang-Cao-Cao \cite[Theorem 1.7]{WCC}.

\noindent{\bf Acknowledgements.} This work was done during a stay of the author at the Vietnam Institute for Advanced Study in Mathematics (VIASM). He would like to thank the staff there, as well as the partially support of VIASM. This research is supported by Vietnam National Foundation for Science and Technology Development under grant number 101.02-2021.12.

\noindent
{\bf Disclosure statement.} No potential conflict of interest was reported by the authors.

\vskip0.2cm
{\footnotesize 
\noindent
{\sc Si Duc Quang}
\vskip0.05cm
\noindent
$^1$Department of Mathematics, Hanoi National University of Education,\\
136-Xuan Thuy, Cau Giay, Hanoi, Vietnam.
\vskip0.05cm
\noindent
$^2$Thang Long Institute of Mathematics and Applied Sciences,\\
Nghiem Xuan Yem, Hoang Mai, HaNoi, Vietnam.
\vskip0.05cm
\noindent
\textit{E-mail}: quangsd@hnue.edu.vn

\end{document}